\numberwithin{equation}{section}  
\providecommand{\U}[1]{\protect\rule{.1in}{.1in}}
\newtheorem{theorem}{Theorem}[section]
\newtheorem{conjecture}[theorem]{Conjecture}
\newtheorem{corollary}[theorem]{Corollary}
\newtheorem{definition}[theorem]{Definition}
\newtheorem{lemma}[theorem]{Lemma}
\newtheorem{proposition}[theorem]{Proposition}
\numberwithin{equation}{section}
\begin{document}
\title[Quenched LDP and Parisi formula for GREM Perceptron]{A quenched large deviation principle and a Parisi formula for a perceptron
version of the GREM.}

\author[E. Bolthausen]{Erwin Bolthausen}            
 \address{E. Bolthausen\\ Institut f\"ur Mathematik \\
Universit\"at Z\"urich \\
Winterthurerstrasse 190 \\
CH-8057 Z\"urich}
\email{eb@math.uzh.ch}

\author[N. Kistler]{Nicola Kistler}
\address{N. Kistler\\Institut f\"ur Angewandte Mathematik\\Rheinische
   Friedrich-Wilhelms-Universit\"at Bonn \\Endenicher Allee 60\\ 53115
   Bonn,
Germany}
\email{nkistler@uni-bonn.de}

\subjclass[2000]{60J80, 60G70, 82B44} \keywords{Disordered systems, Spin Glasses, Quenched Large Deviation Principles}

\thanks{E. Bolthausen is supported in part by the grant No $200020_125247/1$ of the
Swiss Science Foundation. N. Kistler is partially supported by the German Research Council in the SFB 611 and the Hausdorff Center for Mathematics.} 

\begin{abstract}
We introduce a perceptron version of the Generalized Random Energy Model, and
prove a quenched Sanov type large deviation principle for the empirical
distribution of the random energies. The dual of the rate function has a
representation through a variational formula which is closely related to the
Parisi variational formula for the SK-model.
\end{abstract}

\date{\today}

\maketitle

\hfill\emph{Dedicated to J\"{u}rgen G\"{a}rtner on the occasion of his 60th birthday.}

\section{Introduction \label{introduction}}

There has been important progress in the mathematical study of mean field spin
glasses over the last $10$ years. By results of Guerra \cite{guerra} and
Talagrand \cite{TalagrandParisi}, the free energy of the
Sherrington-Kirkpatrick model is known to be given by the formula predicted by
Parisi \cite{parisi}. Furthermore, the description of the \textit{high}
temperature is remarkably accurate, see \cite{talagrand} and references
therein. On the other hand, results for the Gibbs measure at \textit{low}
temperature are more scarce and are restricted to models with a simpler
structure, like Derrida's generalized random energy model, the GREM,
\cite{BovierKurkova} and \cite{DerridaGREM}, the nonhierarchical GREMs
\cite{bokis_two} and the $p$-spin model with large $p$ \cite{talagrand}. To
put on rigorous ground the full Parisi picture remains a major challenge, and
even more so in view of its alleged universality, at least for mean-field models.

We introduce here a model which hopefully sheds some new light on the issue.

In this paper we derive the free energy, which can be analyzed by large
deviation techniques. The limiting free energy turns out to be given by a
Gibbs variational formula which can be linked to a Parisi-type formula
by a duality principle, so that it becomes evident why an infimum appears in
the latter. This duality also gives an interesting interpretation of the
Parisi order parameter in terms of the sequence of inverse of temperatures
associated to the extremal measures from the Gibbs variational principle.

In a forthcoming paper, we will give a full description of the Gibbs measure
in the thermodynamic limit in terms of the Ruelle cascades.

\section{A Perceptron version of the GREM\label{Sect_perceptron}}

Let $\left\{  X_{\alpha,i}\right\}  _{\alpha\in\Sigma_{N},1\leq i\leq N},$ be
random variables which take values in a Polish space $S$ equipped with the
Borel $\sigma$-field $\mathcal{S},$ and defined on a probability space
$\left(  \Omega,\mathcal{F},\mathbb{P}\right)  .$ We write $\mathcal{M}%
_{1}^{+}\left(  S\right)  $ for the set of probability measures on $\left(
S,\mathcal{S}\right)  ,$ which itself is a Polish space. $\Sigma_{N}$ is
exponential in size, typically $\left\vert \Sigma_{N}\right\vert =2^{N}.$ It
is assumed that all $X_{\alpha,i}$ have the same distribution $\mu$, and that
for any fixed $\alpha\in\Sigma_{N},$ the collection $\left\{  X_{\alpha
,i}\right\}  _{1\leq i\leq N}$ is independent. It is however not assumed that
they are independent for different $\alpha.$ The perceptron Hamiltonian is
defined by%
\begin{equation}
-H_{N,\omega}\left(  \alpha\right)  \overset{\mathrm{def}}{=}\sum_{i=1}%
^{N}\phi\left(  X_{\alpha,i}\left(  \omega\right)  \right)
,\label{general_perceptron}%
\end{equation}
where $\phi:S\rightarrow\mathbb{R}$ is a measurable function. One may allow
that the index set for $i$ is rather $\left\{  1,\ldots,\left[  aN\right]
\right\}  $ with $a$ some positive real number, but for convenience, we always
stick to $a=1$ here. The case which is best investigated (see \cite{talagrand}%
) takes for $\alpha$ spin sequences: $\alpha=\left(  \sigma_{1},\ldots
,\sigma_{N}\right)  \in\left\{  -1,1\right\}  ^{N},$ $S=\mathbb{R},$ and the
$X_{\alpha,i}$ are centered Gaussians with%
\begin{equation}
\mathbb{E}\left(  X_{\alpha,i}X_{\alpha^{\prime},i^{\prime}}\right)
=\delta_{i,i^{\prime}}\frac{1}{N}\sum_{j=1}^{N}\sigma_{j}\sigma_{j}^{\prime
}.\label{SK_perceptron}%
\end{equation}
This is closely related to the SK-model, and is actually considerably more
difficult. The model has been investigated by Talagrand \cite{talagrand}, but
a full Parisi formula for the free energy is lacking.

The Hamiltonian (\ref{general_perceptron}) can be written in terms of the
empirical measure%
\begin{equation}
L_{N,\alpha}\overset{\mathrm{def}}{=}\frac{1}{N}\sum_{i=1}^{N}\delta
_{X_{\mathbf{\sigma},i}}\label{empirical_Def}%
\end{equation}
i.e.%
\[
-H_{N,\omega}\left(  \alpha\right)  =N\int\phi\left(  x\right)  L_{N,\alpha
}\left(  dx\right)  .
\]

The quenched free energy is the almost sure limit of%
\[
\frac{1}{N}\log\sum_{\alpha}\exp\left[  -H_{N,\omega}\left(  \alpha\right)
\right]  ,
\]
and it appears natural to ask if this free energy can be obtained by a
quenched Sanov type large deviation principle for $L_{N,\alpha}$ in the
following form:

\begin{definition}
We say that $\left\{  L_{N}\right\}  $ satisfies a \textbf{quenched large
deviation principle} (in short QLDP) with good rate function $J:\mathcal{M}%
_{1}^{+}\left(  S\right)  \rightarrow\left[  -\infty,\infty\right)  ,$
provided the level sets of $J$ are compact, and for any weakly continuous
bounded map $\Phi:\mathcal{M}_{1}^{+}\left(  S\right)  \rightarrow\mathbb{R},
$ one has%
\[
\lim_{N\rightarrow\infty}\frac{1}{N}\log\sum_{\alpha\in\Sigma_{N}}\exp\left[
N\Phi\left(  L_{N,\alpha}\right)  \right]  =\log2+\sup_{\nu\in\mathcal{M}%
_{1}^{+}\left(  S\right)  }\left[  \Phi\left(  \nu\right)  -J\left(
\mu\right)  \right]  ,,\ \mathbb{P}\mathrm{-a.s.}%
\]

\end{definition}

The annealed version of such a QLDP is just Sanov's theorem:%
\begin{align*}
\lim_{N\rightarrow\infty}\frac{1}{N}\log\sum_{\alpha}\mathbb{E}\exp\left[
N\Phi\left(  L_{N,\alpha}\right)  \right]   & =\log2+\lim_{N\rightarrow\infty
}\frac{1}{N}\log\mathbb{E}\exp\left[  N\Phi\left(  L_{N,\alpha}\right)
\right] \\
& =\log2+\sup_{\nu}\left(  \Phi\left(  \nu\right)  -H\left(  \nu%
\vert
\mu\right)  \right)
\end{align*}
where $H\left(  \nu%
\vert
\mu\right)  $ is the usual relative entropy of $\nu$ with respect to $\mu,$
the latter being the distribution of the $X_{\alpha,i}:$%
\[
H\left(  \nu%
\vert
\mu\right)  \overset{\mathrm{def}}{=}\left\{
\begin{array}
[c]{cc}%
\int\log\frac{d\nu}{d\mu}\ d\nu & \mathrm{if\ }\nu\ll\mu\\
\infty & \mathrm{otherwise}%
\end{array}
\right.  .
\]
There is no reason to believe that $H\left(  \nu%
\vert
\mu\right)  =J\left(  \nu\right)  .$

\begin{conjecture}
The empirical measures $\left\{  L_{N,\alpha}\right\}  $ with
(\ref{SK_perceptron}) satisfy a QLDP.
\end{conjecture}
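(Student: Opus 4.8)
The plan is to derive the QLDP from the abstract inverse of Varadhan's lemma (Bryc's lemma), so that everything reduces to computing, for each bounded weakly continuous $\Phi$, the limiting \emph{functional free energy}
\[
F(\Phi)\;\overset{\mathrm{def}}{=}\;\lim_{N\to\infty}\frac1N\log\sum_{\alpha\in\Sigma_N}\exp\left[N\Phi\left(L_{N,\alpha}\right)\right],
\]
and identifying it as $\log2+\sup_\nu\left[\Phi(\nu)-J(\nu)\right]$ for one good rate function $J$ not depending on $\Phi$. The probabilistic half is routine. In the representation $X_{\alpha,i}=N^{-1/2}\sum_j\sigma_j g_{ij}$ with $(g_{ij})$ i.i.d.\ standard Gaussian, the random variable $h_N:=N^{-1}\log\sum_\alpha\exp[N\Phi(L_{N,\alpha})]$, viewed as a function of $(g_{ij})\in\mathbb{R}^{N^2}$, has Lipschitz constant $O(N^{-1/2})$ when $\Phi$ is Lipschitz for a metric generating the weak topology (each $g_{ij}$ enters $h_N$ only through an average over $\alpha$ and over $i$), so Gaussian concentration gives $\mathbb{P}(|h_N-\mathbb{E} h_N|>t)\le 2\exp(-cNt^2)$, and a standard approximation reduces the general bounded weakly continuous case to the Lipschitz one. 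Uniform Gaussian tails of the $X_{\alpha,i}$ yield exponential tightness of $\{L_{N,\alpha}\}_{\alpha}$ under the normalised counting measure $2^{-N}\sum_\alpha\delta_{L_{N,\alpha}}$. Hence, once $F(\Phi)$ is shown to exist and to be $\mathbb{P}$-a.s.\ a deterministic constant for every such $\Phi$, Bryc's lemma gives the QLDP with $J(\nu)=\sup_\Phi\left[\Phi(\nu)-\left(F(\Phi)-\log2\right)\right]$, automatically a good rate function; since $\Phi$ ranges over all bounded continuous \emph{functionals} of $\nu$, and not just the linear ones $\nu\mapsto\int\phi\,d\nu$, the resulting $J$ is allowed to be non-convex and need not equal $H(\nu|\mu)$.

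The core is the computation of $F(\Phi)$, which I would attack by running the Guerra--Talagrand programme in functional form. For the upper bound, interpolate the site fields between the perceptron fields $X_{\alpha,i}$ and a hierarchical Gaussian field indexed by a Ruelle probability cascade with order parameter $\zeta$, and differentiate $t\mapsto N^{-1}\mathbb{E}\log\sum_\alpha\int\exp[N\Phi(\text{empirical measure of the interpolated fields})]\,d(\text{cascade})$ in $t$; Gaussian integration by parts produces the overlap $R(\alpha,\alpha')=N^{-1}\sum_j\sigma_j\sigma_j'$ weighted by a two-replica expression built from $\Phi$, and the cascade is chosen so that this derivative has a definite sign, giving $F(\Phi)\le\log2+\inf_\zeta\mathcal{P}_\Phi(\zeta)$ with $\mathcal{P}_\Phi$ a Parisi-type functional in which $\Phi$ enters through a nonlinear, measure-valued single-site term. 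For linear $\Phi$ this is Guerra's bound \cite{guerra} for the perceptron, and the GREM-perceptron of the preceding sections should be recovered as the special case in which the covariance is already hierarchical, so that $\mathcal{P}_\Phi$ is computed directly by iterating a one-level Sanov estimate. For the matching lower bound I would use the Aizenman--Sims--Starr scheme, writing $F(\Phi)$ as the limit of one-cavity-coordinate increments of the free energy; evaluated on the limiting overlap array this increment equals $\mathcal{P}_\Phi(\zeta^\ast)$ provided the array is ultrametric with the distribution prescribed by $\zeta^\ast$, which one would extract from the Ghirlanda--Guerra identities after a vanishing perturbation of the Hamiltonian chosen so as not to disturb $\Phi$. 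The two matching bounds also establish the existence of $F(\Phi)$. If this goes through, the duality developed in the paper for the GREM-perceptron transfers almost verbatim, expressing $J$ (equivalently its convex dual) through the Parisi-type formula and identifying $\zeta^\ast$ with the sequence of inverse temperatures of the extremal tilted measures in the Gibbs variational principle.

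The step I expect to be the genuine obstacle is the matching lower bound for nonlinear $\Phi$. Tilting by $N\Phi(L_{N,\alpha})$ turns the model into one with an effective many-body interaction among the patterns, which destroys the convexity in a scalar inverse temperature on which every existing treatment of the Gardner perceptron \cite{talagrand} leans --- the positivity-of-overlap argument, the cavity method, the Parisi PDE --- and it is unclear whether the Ghirlanda--Guerra identities and ultrametricity survive in a form strong enough to close the variational problem at this level of generality. (An alternative route, approximating the flat SK covariance by hierarchical ones and passing to the limit in the GREM-perceptron QLDP, runs into the same wall: the required continuity of the rate function under this approximation is essentially equivalent to the full Parisi picture.) Proving a genuinely functional Parisi formula for the perceptron is exactly what is missing; even its linear specialisation, the perceptron free energy, is at present not known in closed Parisi form, which is why the statement is only a conjecture.
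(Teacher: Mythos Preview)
The statement is a \emph{conjecture}, and the paper does not prove it. Immediately after stating it the authors write: ``We don't know how this conjecture could be proved, nor do we have a clear picture what $J$ should be in this case. The only support we have for the conjecture is that it is true in a perceptron version of the GREM.'' There is therefore no proof in the paper to compare your proposal against; the paper's actual content (Theorem~\ref{Th_GREM_perceptron} and Theorem~\ref{Th_Parisi_formula}) concerns only the hierarchical GREM-perceptron, which is offered as evidence for, not a proof of, the SK-perceptron conjecture.

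Your proposal is not a proof either, and to your credit you say so in the last paragraph. The probabilistic shell --- Bryc's inverse Varadhan lemma, Gaussian concentration for the Lipschitz functional $h_N$, exponential tightness from Gaussian tails --- is correctly sketched and reduces the conjecture to the existence and identification of $F(\Phi)$ for every bounded weakly continuous $\Phi$. But that is precisely the open problem. Even for \emph{linear} $\Phi(\nu)=\int\phi\,d\nu$ a full Parisi formula for the Gardner perceptron is not established (as the paper notes in the paragraph preceding the conjecture), so the Guerra upper bound and Aizenman--Sims--Starr lower bound you invoke do not presently close even in that special case; for nonlinear $\Phi$ the sign of the interpolation derivative is not guaranteed by any known convexity, and the Ghirlanda--Guerra/ultrametricity machinery has no established analogue. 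What you have written is a reasonable research outline that correctly locates the obstruction, but it does not constitute a proof, and the paper makes no claim to one.
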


We don't know how this conjecture could be proved, nor do we have a clear
picture what $J$ should be in this case. The only support we have for the
conjecture is that it is true in a perceptron version of the GREM, a model we
are now going to describe.

For $n\in{\mathbb{N}}$, $\alpha=(\alpha_{1},\dots,\alpha_{n})$ with
$1\leq\alpha_{k}\leq2^{\gamma_{i}N}$, $\sum_{k}\gamma_{k}=1$, and $1\leq i\leq
N$, let
\[
X_{\alpha,i}=\left(  X_{\alpha_{1},i}^{1},X_{\alpha_{1},\alpha_{2},i}%
^{2},\dots,X_{\alpha_{1},\alpha_{2},\dots,\alpha_{n},i}^{n}\right)
\]
where the $X^{j}$ are independent, taking values in some Polish Space
$(S,{\mathcal{S}})$ with distribution $\mu_{j}$. For notational convenience,
we assume that the $\gamma_{i}N$ are all integers. Put%
\[
\Gamma_{j}\overset{\mathrm{def}}{=}\sum_{k=1}^{j}\gamma_{j}.
\]
We assume that all the variables in the bracket are independent. The
$X_{\alpha,i}$ take values in $S^{n}.$ The distribution is%
\[
\mu\overset{\mathrm{def}}{=}\mu_{1}\otimes\cdots\otimes\mu_{n}%
\]
The empirical measure $L_{N,\alpha}$ is defined by (\ref{empirical_Def}) which
is a random element in ${\mathcal{M}}_{1}^{+}(S^{n})$. $n$ is fixed in all we
are doing.

Given a measure $\nu\in{\mathcal{M}}_{1}^{+}(S^{n})$, and $1\leq j\leq n,$ we
write $\nu^{(j)}$ for its marginal on the first $j$ coordinates. We define
subsets $\mathcal{R}_{j}$ of ${\mathcal{M}}_{1}^{+}(S^{n})$, $1\leq j\leq n$
by%
\[
\mathcal{R}_{j}\overset{\mathrm{def}}{=}\left\{  \nu\in{\mathcal{M}}_{1}%
^{+}(S^{n}):H\left(  \nu^{(j)}\mid\mu^{(j)}\right)  \leq\Gamma_{j}%
\log2\right\}  .
\]
We will also consider the sets%
\[
\mathcal{R}_{j}^{=}\overset{\mathrm{def}}{=}\left\{  \nu\in{\mathcal{M}}%
_{1}^{+}(S^{n}):H\left(  \nu^{(j)}\mid\mu^{(j)}\right)  =\Gamma_{j}%
\log2\right\}  .
\]

For $\nu\in{\mathcal{M}}_{1}^{+}(S^{n})$ let%
\[
J\left(  \nu\right)  =\left\{
\begin{array}
[c]{cc}%
H(\nu\mid\mu) & \mathrm{if\ }\nu\in\bigcap\nolimits_{j=1}^{n}\mathcal{R}_{j}\\
\infty & \mathrm{otherwise}%
\end{array}
\right.  .
\]
It is evident that $J$ is convex and has compact level sets.

Our first main result is:

\begin{theorem}
\label{Th_GREM_perceptron} $\left\{  L_{N,\alpha}\right\}  $ satisfies a QLDP
with rate function $J.$
\end{theorem}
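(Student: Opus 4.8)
The plan is to establish matching upper and lower bounds for $\frac{1}{N}\log \sum_{\alpha} \exp[N\Phi(L_{N,\alpha})]$, both equal to $\log 2 + \sup_{\nu}[\Phi(\nu) - J(\nu)]$, almost surely. The natural first step is to reduce from the partition function with a general continuous bounded $\Phi$ to counting statistics: for a fixed open (resp. closed) set $A \subseteq \mathcal{M}_1^+(S^n)$, control $\frac{1}{N}\log \#\{\alpha \in \Sigma_N : L_{N,\alpha} \in A\}$. Since $\mathcal{M}_1^+(S^n)$ is Polish and $\Phi$ bounded continuous, a standard exponential-tilting/covering argument (as in the proof of Varadhan's lemma) passes from the set-level statement to the $\Phi$-level statement; I would do that reduction once and then concentrate on the combinatorial estimate. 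The key quantity to understand is therefore $G_N(A) := \#\{\alpha : L_{N,\alpha}\in A\}$ and its almost-sure exponential growth rate.

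The heart of the matter is the hierarchical structure. Write $\alpha = (\alpha_1,\dots,\alpha_n)$ and condition successively. Given the first-level label $\alpha_1$, the variables $X^1_{\alpha_1,i}$ are i.i.d.\ $\mu_1$, so by Sanov's theorem the empirical measure of the first coordinate is close to a given $\nu^{(1)}$ with probability $\exp[-N H(\nu^{(1)}\mid\mu^{(1)}) + o(N)]$; since there are $2^{\gamma_1 N}$ choices of $\alpha_1$, a first-moment (union) bound shows that no first-level marginal with $H(\nu^{(1)}\mid\mu^{(1)}) > \Gamma_1 \log 2$ is attainable, which is exactly the constraint defining $\mathcal{R}_1$; and a second-moment / independence argument shows that marginals satisfying the \emph{strict} inequality are attained by exponentially many $\alpha_1$, with the right count. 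One then iterates: given a "good" prefix $(\alpha_1,\dots,\alpha_{j-1})$ realizing a marginal near $\nu^{(j-1)}$, the next-level variables $X^j_{\alpha_1,\dots,\alpha_j,i}$ are i.i.d.\ $\mu_j$ and independent across the $2^{\gamma_j N}$ choices of $\alpha_j$, so conditionally the number of extensions realizing a refinement near $\nu^{(j)}$ is $\exp[N(\gamma_j \log 2 - (H(\nu^{(j)}\mid\mu^{(j)}) - H(\nu^{(j-1)}\mid\mu^{(j-1)})) ) + o(N)]$ when the bracket is positive, and (with overwhelming probability) zero otherwise. Multiplying the conditional counts down the tree and summing over the $n$ levels telescopes the entropy increments into $H(\nu\mid\mu)$ and the budget increments into $\log 2$, yielding growth rate $\log 2 - H(\nu\mid\mu)$ precisely on $\bigcap_j \mathcal{R}_j$ — i.e.\ $\log 2 - J(\nu)$.

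Technically I would organize this as follows. (i) Discretize: fix a finite measurable partition of $S^n$ and a finite $\varepsilon$-net of the relevant compact part of $\mathcal{M}_1^+(S^n)$, so that "$L_{N,\alpha}$ near $\nu$" becomes a statement about finitely many cell-frequencies; take the net/partition to depend on $N$ and refine slowly. (ii) Upper bound: pure first-moment plus Sanov, using that violating any $\mathcal{R}_j$ at the $j$-th marginal forces the count to $0$ eventually; Borel–Cantelli over the (finitely many, then countably many via refinement) net points gives the almost-sure statement. (iii) Lower bound: here one needs the conditional second-moment method at each level to show the counts concentrate around their means; the independence of the $X^j$ across the last coordinate given the prefix is what makes the variance manageable. (iv) Handle the boundary case $\nu \in \mathcal{R}_j^=$ for some $j$ by an approximation/continuity argument, perturbing $\nu$ slightly into the interior — this is where one must check that $\sup_\nu[\Phi(\nu)-J(\nu)]$ is not changed by restricting to $\bigcap_j \{H(\nu^{(j)}\mid\mu^{(j)}) < \Gamma_j\log 2\}$, using convexity of the entropy and lower semicontinuity.

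The main obstacle I anticipate is the lower bound at the critical (equality) cases and, relatedly, making the cascade of conditional second-moment estimates uniform enough to survive the almost-sure (Borel–Cantelli) passage: one is multiplying $n$ conditional counts, each of which is a sum of a random (exponentially large) number of weakly dependent indicators, and one must ensure the $o(N)$ error terms and the exceptional probabilities are summable in $N$ uniformly over a net that is itself being refined. Getting the fluctuations of the level-$j$ counts controlled given an \emph{atypical-but-allowed} prefix — rather than given a typical prefix — is the delicate point, and I expect this to be the bulk of the real work; everything else (the reduction to $G_N(A)$, the telescoping of entropies, the upper bound) is comparatively routine.
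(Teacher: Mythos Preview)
Your plan is sound and would succeed, but it is organized differently from the paper's proof in a way worth noting. You propose a \emph{cascade}: condition level by level, at each stage running a conditional Sanov estimate and a conditional second-moment argument to count good extensions of good prefixes, then telescope the entropy increments. The paper instead runs a single \emph{global} second-moment computation on the full count $M_N(U)=\#\{\alpha:L_{N,\alpha}\in U\}$: it expands $\mathbb{E}[M_N(U)^2]$ as a sum over pairs $(\alpha,\alpha')$, groups them by the tree overlap $q(\alpha,\alpha')=\max\{j:\alpha^{(j)}=\alpha'^{(j)}\}$, and controls each group via a \emph{pair} large-deviation principle (Lemma~\ref{ldp_empirical_measure_couple}) for two empirical measures that share their first $j$ coordinates. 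This yields $\operatorname{var}[M_N(U)]\le e^{-\delta N}(\mathbb{E}M_N(U))^2$ in one stroke, with no iteration.

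That device buys precisely what you flag as the hard part. In your scheme the delicate step is controlling the level-$j$ extension count given an atypical-but-allowed prefix, uniformly over the random, exponentially large set of such prefixes; you would need a conditional Sanov statement that is uniform in the realized prefix empirical measure, together with some care that the mismatch between the target first marginal $\nu^{(j-1)}$ and the actual $L_N^{(j-1)}$ does not spoil the entropy arithmetic (only lower semicontinuity is available). The paper's overlap decomposition never conditions on a prefix at all---the pair-LDP absorbs the tree dependence in one shot---so that difficulty simply does not arise. Your approach is closer in spirit to branching-random-walk arguments and arguably more transparent about where each constraint $\mathcal{R}_j$ comes from; the paper's is shorter. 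The remaining ingredients (first-moment upper bound, finite covering of a compact entropy level set, and the boundary case via the convex interpolation $\nu_\lambda=(1-\lambda)\nu_0+\lambda\mu$ into the strict interior of $\bigcap_j\mathcal{R}_j$) are handled exactly as you outline.
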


For the rest of this section, we will focus on linear functionals, $\Phi
(\nu)=\int\phi(x)\nu({d}x)$, for a bounded continuous function $\phi
:S^{n}\rightarrow\mathbb{R}.$ For a probability measure $\nu$ on $S^{n}$, we
set%
\[
\operatorname{Gibbs}(\phi,\nu)\overset{\mathrm{def}}{=}\int\phi(x)\nu
(dx)-H(\nu\mid\mu),
\]
and define the Legendre transform of $J$ by%
\[
J^{\ast}\left(  \phi\right)  \overset{\mathrm{def}}{=}\sup_{\nu}\left[
\int\phi(x)\nu(dx)-J\left(  \nu\right)  \right]  =\sup\left\{
\operatorname{Gibbs}(\phi,\nu):\nu\in\bigcap\nolimits_{j=1}^{n}\mathcal{R}%
_{j}\right\}  .
\]
whenever the a.s.-limit exists. As a corollary of Theorem
\ref{Th_GREM_perceptron} we have

\begin{corollary}
\label{Cor_GREMperceptron_linear}Assume that $\phi:S\rightarrow{\mathbb{R}} $
is bounded and continuous.%
\[
\lim_{N\rightarrow\infty}\frac{1}{N}\log\sum_{\alpha}\exp\left[
\sum\nolimits_{i=1}^{N}\phi\left(  X_{\alpha,i}\right)  \right]  =J^{\ast
}\left(  \phi\right)  +\log2,\ \mathrm{a.s.}%
\]

\end{corollary}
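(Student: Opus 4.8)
The plan is to deduce the corollary from Theorem \ref{Th_GREM_perceptron} by the standard Varadhan-type argument, the only subtlety being that $\phi$ is bounded but the functional $\Phi(\nu) = \int \phi \, d\nu$ is already weakly continuous and bounded, so the QLDP applies directly with this $\Phi$. Concretely, since $\phi:S^n\to\mathbb{R}$ is bounded and continuous, $\nu\mapsto\int\phi\,d\nu$ is weakly continuous and bounded on $\mathcal{M}_1^+(S^n)$, hence by Theorem \ref{Th_GREM_perceptron},
\[
\lim_{N\to\infty}\frac1N\log\sum_{\alpha}\exp\!\left[N\!\int\phi\,dL_{N,\alpha}\right]
=\log 2+\sup_{\nu}\left[\int\phi\,d\nu-J(\nu)\right]
=\log 2+J^{\ast}(\phi),\quad\mathbb{P}\text{-a.s.}
\]
Finally one observes that $N\int\phi(x)\,L_{N,\alpha}(dx)=\sum_{i=1}^N\phi(X_{\alpha,i})$ by the definition \eqref{empirical_Def} of the empirical measure, which rewrites the left-hand side as $\frac1N\log\sum_\alpha\exp[\sum_{i=1}^N\phi(X_{\alpha,i})]$ and gives the claim.

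The one genuine point to check is that the supremum defining $J^{\ast}(\phi)$ is attained and that the limit is indeed a number (not $+\infty$): since $J$ has compact level sets and $\nu\mapsto\int\phi\,d\nu$ is bounded and weakly continuous, the supremum is finite and attained on the (nonempty, as $\mu\in\bigcap_j\mathcal{R}_j$) effective domain $\bigcap_{j=1}^n\mathcal{R}_j$ of $J$. This also justifies identifying $\sup_\nu[\Phi(\nu)-J(\nu)]$ with $J^{\ast}(\phi)=\sup\{\operatorname{Gibbs}(\phi,\nu):\nu\in\bigcap_j\mathcal{R}_j\}$ as written.

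There is essentially no obstacle here: the corollary is a direct specialization of the theorem to linear $\Phi$, and no truncation or approximation argument is needed precisely because $\phi$ — and hence $\Phi$ — is assumed bounded. The only care required is notational: the statement writes $\phi:S\to\mathbb{R}$, but the empirical measures live on $S^n$, so one reads $\phi$ as a bounded continuous function on $S^n$ (equivalently, a bounded continuous function on $S$ extended coordinatewise), and then the identity $\sum_{i=1}^N\phi(X_{\alpha,i})=N\int\phi\,dL_{N,\alpha}$ is immediate from \eqref{empirical_Def}.
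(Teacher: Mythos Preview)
Your proposal is correct and matches the paper's approach: the paper states the result as an immediate corollary of Theorem~\ref{Th_GREM_perceptron} without further argument, and your write-up spells out exactly the intended specialization of the QLDP to the linear, bounded, weakly continuous functional $\Phi(\nu)=\int\phi\,d\nu$, together with the identity $N\int\phi\,dL_{N,\alpha}=\sum_{i=1}^N\phi(X_{\alpha,i})$.
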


We next discuss a dual representation of $J^{\ast}\left(  \phi\right)  $.
Essentially, this comes up by investigating which measures solve the
variational problem. Remark that without the restrictions $\nu\in
\bigcap\nolimits_{j=1}^{n}\mathcal{R}_{j},$ we would simply get%
\[
d\nu=\frac{\mathrm{e}^{\phi}d\mu}{\int\mathrm{e}^{\phi}d\mu}%
\]
as the maximizer.

Let $\Delta$ be the set of sequences $\mathbf{m}=\left(  m_{1},\ldots
,m_{n}\right)  $ with $0<m_{1}\leq m_{2}\leq\cdots\leq m_{n}\leq1.$ For
$\mathbf{m}\in\Delta,$ and $\phi:S^{n}\rightarrow\mathbb{R}$ bounded, we
define recursively functions $\phi_{j},~0\leq j\leq n,\ \phi_{j}%
:S^{j}\rightarrow\mathbb{R},$ by%
\begin{equation}
\phi_{n}\overset{\mathrm{def}}{=}\phi,\label{Def_phin}%
\end{equation}%
\begin{equation}
\phi_{j-1}\left(  x_{1},\ldots,x_{j-1}\right)  \overset{\mathrm{def}}{=}%
\frac{1}{m_{j}}\log\int\operatorname{exp}\left[  {m}_{j}\phi_{j}\left(
x_{1},\dots,x_{j-1},x_{j}\right)  \right]  \mu_{j}\left(  dx_{j}\right)
.\label{Def_phij}%
\end{equation}
$\phi_{0}$ is just a real number, which we denote by $\phi_{0}\left(
\mathbf{m}\right)  .$

Remark that if some of the $m_{i}$ agree, say $m_{k}=m_{k+1}=\cdots=m_{l},$
$k<l,$ then $\phi_{k-1}$ is obtained from $\phi_{l}$ by%
\[
\phi_{k-1}\left(  x_{1},\ldots,x_{k-1}\right)  =\frac{1}{m_{k}}\log
\int\operatorname{exp}\left[  {m}_{k}\phi_{l}\left(  x_{1},\dots,x_{k-1}%
,x_{k},\ldots,x_{l}\right)  \right]  \prod\limits_{j=k}^{l}\mu_{j}\left(
dx_{j}\right)  .
\]
In particular, if all the $m_{i}$ are $1,$ then%
\[
\phi_{0}=\log\int\exp\left[  \phi\right]  d\mu.
\]
This latter case corresponds to the \textquotedblleft replica
symmetric\textquotedblright\ situation. Put%
\begin{equation}
\operatorname{Parisi}\left(  \mathbf{m},\phi\right)  \overset{\mathrm{def}}%
{=}\sum\nolimits_{i=1}^{n}{\frac{\gamma_{i}\log2}{m_{i}}}+\phi_{0}\left(
\mathbf{m}\right)  -\log2\label{Parisi}%
\end{equation}

\begin{theorem}
\label{Th_Parisi_formula}Assume that $\phi:S\rightarrow{\mathbb{R}}$ is
bounded and continuous. Then%
\begin{equation}
J^{\ast}\left(  \phi\right)  =\inf_{\mathbf{m}\in\Delta}\operatorname{Parisi}%
\left(  \mathbf{m},\phi\right)  .\label{Variationformula3}%
\end{equation}

\end{theorem}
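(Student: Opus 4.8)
The goal is to identify the Legendre transform $J^{\ast}(\phi)=\sup\{\operatorname{Gibbs}(\phi,\nu):\nu\in\bigcap_{j=1}^{n}\mathcal{R}_{j}\}$ with $\inf_{\mathbf{m}\in\Delta}\operatorname{Parisi}(\mathbf{m},\phi)$. The natural route is Lagrangian duality: the constraints $H(\nu^{(j)}\mid\mu^{(j)})\le\Gamma_{j}\log 2$ are convex, so I would introduce multipliers and pass between the constrained supremum and a min–max. Concretely, I plan to first prove the inequality $J^{\ast}(\phi)\le\inf_{\mathbf{m}}\operatorname{Parisi}(\mathbf{m},\phi)$ by exhibiting, for every $\mathbf{m}\in\Delta$ and every admissible $\nu$, the bound $\operatorname{Gibbs}(\phi,\nu)\le\operatorname{Parisi}(\mathbf{m},\phi)$. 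The key algebraic identity here is a telescoping/tower decomposition of the relative entropy along the marginals: writing $H(\nu\mid\mu)=\sum_{j=1}^{n}H(\nu^{(j)}\mid\nu^{(j-1)}\otimes\mu_j)$ (the chain rule for relative entropy, where the $j$-th term is the conditional entropy of the $j$-th coordinate given the first $j-1$), and then using the Gibbs variational principle coordinate by coordinate in the form $\int\psi\,d\rho-H(\rho\mid\mu_j)\le\frac{1}{m_j}\log\int e^{m_j\psi}d\mu_j+\bigl(\frac1{m_j}-1\bigr)\bigl(-H(\rho\mid\mu_j)\bigr)$... more cleanly, I would write $\int\phi\,d\nu-H(\nu\mid\mu)$ as a sum of $n$ telescoping blocks, insert the definition \eqref{Def_phij} of $\phi_{j-1}$ into each block, and use $H\ge 0$ together with the constraints $H(\nu^{(j)}\mid\mu^{(j)})\le\Gamma_j\log2$ weighted by the increments $\frac1{m_i}-\frac1{m_{i+1}}\ge0$ (which are nonnegative exactly because $\mathbf{m}$ is nondecreasing — this is where the ordering constraint on $\Delta$ is used). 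Collecting terms produces precisely $\sum_i\frac{\gamma_i\log2}{m_i}+\phi_0(\mathbf{m})-\log2$.

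For the reverse inequality $J^{\ast}(\phi)\ge\inf_{\mathbf{m}}\operatorname{Parisi}(\mathbf{m},\phi)$, I would argue that the variational problem defining $J^{\ast}$ has a maximizer $\nu^{\ast}$ (the feasible set is weakly compact by the compact level sets of $H$, and $\operatorname{Gibbs}(\phi,\cdot)$ is upper semicontinuous), and then analyze the Kuhn–Tucker conditions at $\nu^{\ast}$. The optimal $\nu^{\ast}$ is a tilted measure of Ruelle-cascade type: there are multipliers $\lambda_1,\dots,\lambda_n\ge0$ (one per active constraint) such that $d\nu^{\ast}\propto$ a nested exponential built from $\phi$ with "temperatures" determined by the $\lambda_j$. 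Setting $\frac1{m_j}-\frac1{m_{j+1}}$ (with $m_{n+1}:=1$) equal to $\lambda_j/\log2$, i.e. reading off $\mathbf{m}$ from the multipliers, I expect $\nu^{\ast}$ to have conditional densities of the form $\exp[m_j\phi_j-m_j\phi_{j-1}]$ so that $H(\nu^{\ast(j)}\mid\mu^{(j)})$ evaluates to $\Gamma_j\log2$ on active constraints, and then $\operatorname{Gibbs}(\phi,\nu^{\ast})$ computes out to $\operatorname{Parisi}(\mathbf{m}^{\ast},\phi)$ for that particular $\mathbf{m}^{\ast}$. Combined with the first half this pins down $J^{\ast}(\phi)=\operatorname{Parisi}(\mathbf{m}^{\ast},\phi)\ge\inf_{\mathbf{m}}\operatorname{Parisi}(\mathbf{m},\phi)$, and the two inequalities close the loop.

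The main obstacle, I expect, is the book-keeping in the complementary-slackness step: some constraints may be inactive ($H(\nu^{\ast(j)}\mid\mu^{(j)})<\Gamma_j\log2$), forcing the corresponding $m_j=m_{j+1}$, and one must check that the recursion \eqref{Def_phij} genuinely collapses across equal $m_i$'s (the remark before \eqref{Parisi} is exactly the lemma needed), so that the value is unchanged and still has the claimed form. A related subtlety is boundary behavior: $m_1$ could in principle be forced to $0$ if a low-level constraint is very tight, so one should verify $\inf_{\mathbf{m}\in\Delta}$ over the half-open set is attained in the interior (using boundedness of $\phi$, which makes $\phi_0(\mathbf{m})$ behave like $\max\phi$ as $m_j\downarrow0$ and keeps $\gamma_j\log2/m_j$ from being compensated). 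A cleaner alternative that sidesteps the maximizer-existence discussion is to prove the $\ge$ direction directly by a greedy/inductive construction: peel off one coordinate at a time, solving a one-dimensional constrained Gibbs problem whose Lagrange multiplier is $1/m_n$, then recurse on $\phi_{n-1}$ with the remaining budget $\sum_{k<n}\gamma_k\log2$; the induction hypothesis then yields $\inf$ over the shorter Parisi functional, and reassembling gives the full formula. I would likely present the duality version as the main argument and mention the inductive one as a remark.
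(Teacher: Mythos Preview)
Your proposal is correct and is organized dually to the paper's argument. For the inequality $J^{\ast}(\phi)\le\operatorname{Parisi}(\mathbf{m},\phi)$ you prove it for \emph{every} $\mathbf{m}\in\Delta$ and every admissible $\nu$, via the chain rule $H(\nu\mid\mu)=\sum_j H(\nu^{(j)}\mid\nu^{(j-1)}\otimes\mu_j)$, the one--step Gibbs bound $\int\phi_j\,d\nu^{(j)}\le\int\phi_{j-1}\,d\nu^{(j-1)}+\tfrac{1}{m_j}H(\nu^{(j)}\mid\nu^{(j-1)}\otimes\mu_j)$, and an Abel summation against the nonnegative increments $\tfrac{1}{m_j}-\tfrac{1}{m_{j+1}}$. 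The paper instead proves the comparison only for the \emph{optimal} $\mathbf{m}$: it first shows $\operatorname{Parisi}(\mathbf{m},\phi)$ is strictly convex in the variables $1/m_j$, locates the unique minimizer $\mathbf{m}^{\ast}$, and uses the finite--dimensional first--order conditions at $\mathbf{m}^{\ast}$ to deduce that the cascade measure $G_{\phi,\mathbf{m}^{\ast}}$ lies in $\bigcap_j\mathcal{R}_j$ and saturates $\mathcal{R}_{j_r}^{=}$ precisely where $\mathbf{m}^{\ast}$ jumps; this saturation is then used to run a telescoping inequality (the quantities $D_k$) showing $\operatorname{Gibbs}(\nu,\phi)\le\operatorname{Gibbs}(G_{\phi,\mathbf{m}^{\ast}},\phi)=\operatorname{Parisi}(\mathbf{m}^{\ast},\phi)$.

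The trade--off is this: your upper bound is cleaner and more general, but your lower bound requires extracting KKT multipliers from the infinite--dimensional constrained problem and then verifying that the resulting $\mathbf{m}$ actually lands in $\Delta$ (in particular that $m_n\le 1$ and $m_1>0$); this is doable, with Slater's condition supplied by $\nu=\mu$, but it is exactly the ``book--keeping in the complementary--slackness step'' you flag. The paper sidesteps that by working on the finite--dimensional Parisi side, where the optimality conditions are elementary (perturb whole blocks of equal $m_i$ up or down), and obtains admissibility and the equality $\operatorname{Gibbs}(G,\phi)=\operatorname{Parisi}(\mathbf{m}^{\ast},\phi)$ by direct computation. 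Either route closes the loop; yours is the textbook Lagrangian--duality reading, the paper's is the dual picture starting from the Parisi minimizer.
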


The expression for $J^{\ast}\left(  \phi\right)  $ in this theorem is very
similar to the Parisi formula for the SK-model. Essentially the only
difference is the first summand which in the SK-case is a quadratic
expression. In our case (in contrast to the still open situation in the
SK-model), we can prove that the infimum is uniquely attained, as we will
discuss below.

The derivation of the theorem from Corollary \ref{Cor_GREMperceptron_linear}
is done by identifying first the possible maximizers in the variational
formula for $J^{\ast}\left(  \phi\right)  $. They belong to a family of
distributions, parametrized by $\mathbf{m}.$ The maximizer inside this family
is then obtained by minimizing $\mathbf{m}$ according to
(\ref{Variationformula3}), and one then identifies the two expressions. The
procedure is quite standard in large deviation situations.

Two conventions: $C$ stands for a generic positive constant, not necessarily
the same at different occurences. If there are inequalities stated between
expressions containing $N,$ it is tacitely assumed that they are valid maybe
only for large enough $N.$

\section{Proofs}

\subsection{The Gibbs variational principle: Proof of Theorem
\ref{Th_GREM_perceptron}}

If $A\in{\mathcal{S}}$, we put $H(A\mid\mu)\overset{\mathrm{def}}%
{=}\operatorname{inf}_{\nu\in A}H(\nu\mid\mu)$. If $S$ is a Polish Space, and
${\mathcal{S}}$ its Borel $\sigma$-field, then it is well known that
$\nu\rightarrow H(\nu\mid\mu)$ is lower semicontinuous in the weak topology.
This follows from the representation
\begin{equation}
H(\nu\mid\mu)=\sup_{u\in{\mathcal{U}}}\left[  \int u\,d\nu-\log\int
\mathrm{e}^{u}d\mu\right]  ,\label{sup_representation}%
\end{equation}
where ${\mathcal{U}}$ is the set of bounded continuous functions
$S\rightarrow{\mathbb{R}}$.

For $(S,{\mathcal{S}}),(S^{\prime},{\mathcal{S}}^{\prime})$ two Polish Spaces,
and $\nu\in{\mathcal{M}}_{1}^{+}(S\times S^{\prime})$. If $\mu\in{\mathcal{M}%
}_{1}^{+}(S)$, $\mu^{\prime}\in{\mathcal{M}}_{1}^{+}(S^{\prime})$ we have,
\begin{equation}
H\left(  \nu\mid\mu\otimes\mu^{\prime}\right)  =H\left(  \nu^{(1)}\mid
\mu\right)  +H\left(  \nu\mid\nu^{(1)}\otimes\mu^{\prime}\right)
,\label{Entropy_Add}%
\end{equation}
where $\nu^{(1)}$ is the first marginal of $\nu$ on $S$.

\begin{lemma}
\label{lower_semicontinuity} $H(\nu\mid\nu^{(1)}\otimes\mu^{\prime})$ is a
lower semicontinuous function of $\nu$ in the weak topology.
\end{lemma}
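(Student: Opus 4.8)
The plan is to reduce the statement to the already-recorded lower semicontinuity of $\nu\mapsto H(\nu\mid\mu\otimes\mu')$ via the additivity identity \eqref{Entropy_Add}, together with the continuity of the marginal map. Rewriting \eqref{Entropy_Add} with $\mu$ replaced by $\nu^{(1)}$ itself is not quite legitimate (the left side would involve $H(\nu\mid\nu^{(1)}\otimes\mu')$, which is what we want, but the identity was stated for a \emph{fixed} reference measure), so instead I would work directly from the supremum representation \eqref{sup_representation}, adapted to the conditional entropy. The key observation is that for $\nu\in{\mathcal M}_1^+(S\times S')$ with first marginal $\nu^{(1)}$,
\[
H\bigl(\nu\mid\nu^{(1)}\otimes\mu'\bigr)=\sup_{v\in{\mathcal U}(S\times S')}\left[\int v\,d\nu-\int\log\!\left(\int \mathrm{e}^{v(x,y)}\,\mu'(dy)\right)\nu^{(1)}(dx)\right],
\]
i.e. the Donsker--Varadhan representation with the ``log-Laplace'' term computed fiberwise against $\mu'$ and then integrated against the first marginal.

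The main steps are then: \emph{(i)} establish the displayed variational formula, which is the disintegration form of \eqref{sup_representation} — one proves ``$\geq$'' by plugging test functions into the definition of relative entropy, and ``$\leq$'' by the usual argument taking $v$ close to $\log\frac{d\nu}{d(\nu^{(1)}\otimes\mu')}$ when $\nu\ll\nu^{(1)}\otimes\mu'$ (and noting both sides are $+\infty$ otherwise, since non-absolute-continuity on the product forces a test function pushing the right side to $+\infty$); \emph{(ii)} for each \emph{fixed} bounded continuous $v$, check that the map
\[
\nu\longmapsto\int v\,d\nu-\int\log\!\left(\int \mathrm{e}^{v(x,y)}\,\mu'(dy)\right)\nu^{(1)}(dx)
\]
is weakly continuous: the first term is continuous since $v\in{\mathcal U}$, and the second term is $\int g\,d\nu^{(1)}$ where $g(x)=\log\int \mathrm{e}^{v(x,y)}\mu'(dy)$ is bounded and continuous in $x$ by dominated convergence (using boundedness of $v$), while $\nu\mapsto\nu^{(1)}$ is weakly continuous; \emph{(iii)} conclude that $H(\nu\mid\nu^{(1)}\otimes\mu')$, being a supremum of weakly continuous functions, is weakly lower semicontinuous.

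The only genuine subtlety — and the step I would treat most carefully — is step (i) in the case $\nu\not\ll\nu^{(1)}\otimes\mu'$: one must exhibit bounded continuous $v$ making the right-hand side arbitrarily large, which requires a regularity/approximation argument (e.g. truncating and mollifying an unbounded ``would-be density,'' or invoking that on Polish spaces the supremum over bounded continuous functions already computes $H$). A cleaner alternative that sidesteps this entirely: use \eqref{Entropy_Add} with the fixed reference measure $\mu'$ prepended by an \emph{arbitrary} fixed $\rho\in{\mathcal M}_1^+(S)$ to get $H(\nu\mid\rho\otimes\mu')=H(\nu^{(1)}\mid\rho)+H(\nu\mid\nu^{(1)}\otimes\mu')$, hence
\[
H\bigl(\nu\mid\nu^{(1)}\otimes\mu'\bigr)=\sup_{\rho}\Bigl[H(\nu\mid\rho\otimes\mu')-H(\nu^{(1)}\mid\rho)\Bigr],
\]
but this reintroduces an $H(\nu^{(1)}\mid\rho)$ with the \emph{wrong} sign, so it does not immediately give lower semicontinuity; therefore I would commit to the Donsker--Varadhan route (i)--(iii) above, with the non-absolutely-continuous case handled by the standard Polish-space approximation lemma.
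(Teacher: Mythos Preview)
Your proposal is correct, but it takes a detour that the paper avoids. The paper simply applies the standard Donsker--Varadhan representation \eqref{sup_representation} \emph{verbatim} with the reference measure $\nu^{(1)}\otimes\mu'$:
\[
H(\nu\mid\nu^{(1)}\otimes\mu')=\sup_{u\in\mathcal{U}}\left[\int u\,d\nu-\log\int\mathrm{e}^{u}\,d(\nu^{(1)}\otimes\mu')\right],
\]
and then observes that for each fixed $u$, \emph{both} terms are weakly continuous in $\nu$: the first trivially, and the second because $\int\mathrm{e}^{u}\,d(\nu^{(1)}\otimes\mu')=\int g\,d\nu^{(1)}$ with $g(x)=\int\mathrm{e}^{u(x,y)}\mu'(dy)$ bounded continuous and $\nu\mapsto\nu^{(1)}$ continuous. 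A supremum of continuous functions is lower semicontinuous, and the proof is done in three lines.

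Your concern that ``the identity was stated for a fixed reference measure'' is misplaced: formula \eqref{sup_representation} holds for \emph{every} pair $(\nu,\rho)$, so in particular for $\rho=\nu^{(1)}\otimes\mu'$ at each $\nu$ separately. Nothing about lower semicontinuity requires the reference measure to be globally fixed---one only needs that for each test function $u$ the bracketed expression is continuous in $\nu$, and here the $\nu$-dependence of the second term (through $\nu^{(1)}$) is manifestly continuous. Your fiberwise formula with $\log$ \emph{inside} the $\nu^{(1)}$-integral is also valid and leads to the same conclusion, but establishing it (your step (i), especially the singular case) is precisely the extra work the paper's route sidesteps. In short: the paper keeps the $\log$ outside and checks continuity of the second term directly; you move the $\log$ inside and then have to re-derive a variational identity. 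Both work; the paper's is shorter.
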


\begin{proof}
Applying (\ref{sup_representation}) to
\[
H(\nu\mid\nu^{(1)}\otimes\mu^{\prime})=\sup_{u\in{\mathcal{U}}}\left[  \int
ud\nu-\log\int\mathrm{e}^{u}d\left(  \nu^{(1)}\otimes\mu^{\prime}\right)
\right]  ,
\]
where ${\mathcal{U}}$ denotes the set of bounded continuous functions $S\times
S^{\prime}\rightarrow{\mathbb{R}}$. For any fixed $u\in{\mathcal{U}}$, both
functions $\nu\rightarrow\int u\,d\nu$ and $\nu\rightarrow\log\int
\mathrm{e}^{u}d\left(  \nu^{(1)}\otimes\mu^{\prime}\right)  $ are continuous,
and from this the desired semicontinuity property follows.
\end{proof}

We will need the following \textquotedblleft relative\textquotedblright%
\ version of Sanov's theorem. Consider three independent sequences of i.i.d.
random variables $(X_{i}),(Y_{i}),(Z_{i})$, taking values in three Polish
spaces $S,S^{\prime},S^{\prime\prime},$ and with laws $\mu,\mu^{\prime}%
,\mu^{\prime\prime}$. We consider the empirical processes
\[
L_{N}\overset{\mathrm{def}}{=}{\frac{1}{N}}\sum_{i=1}^{N}\delta_{(X_{i}%
,Y_{i})},\ R_{N}\overset{\mathrm{def}}{=}{\frac{1}{N}}\sum_{i=1}^{N}%
\delta_{\left(  X_{i},Z_{i}\right)  }.
\]
The pair $(L_{N},R_{N})$ takes values in ${\mathcal{M}}_{1}^{+}(S\times
S^{\prime})\times{{\mathcal{M}}}_{1}^{+}(S\times S^{\prime\prime}).$

\begin{lemma}
\label{ldp_empirical_measure_couple} The sequence $(L_{N},R_{N})$ satisfies a
LDP with rate function
\[
J(\nu,\theta)=%
\begin{cases}
H\left(  \nu^{(1)}\mid\mu\right)  +H\left(  \nu\mid\nu^{(1)}\otimes\mu
^{\prime}\right)  +H\left(  \theta\mid\theta^{(1)}\otimes\mu^{\prime\prime
}\right)  , & \mathrm{if}\;\nu^{(1)}=\theta^{(1)}\\
\infty & \mathrm{otherwise}.
\end{cases}
\]

\end{lemma}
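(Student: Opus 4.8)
The plan is to deduce the joint LDP for $(L_N,R_N)$ from the ordinary Sanov theorem applied to the triple empirical measure, combined with the contraction principle. First I would consider $T_N \overset{\mathrm{def}}{=} \frac1N\sum_{i=1}^N \delta_{(X_i,Y_i,Z_i)}$, which is the empirical measure of the i.i.d.\ sequence $(X_i,Y_i,Z_i)$ taking values in $S\times S'\times S''$. By Sanov's theorem, $T_N$ satisfies an LDP in ${\mathcal{M}}_1^+(S\times S'\times S'')$ with good rate function $\rho \mapsto H(\rho \mid \mu\otimes\mu'\otimes\mu'')$. The map $\rho \mapsto (\rho^{S\times S'}, \rho^{S\times S''})$ sending $\rho$ to its marginals on the first two and on the first-and-third coordinates is weakly continuous, and $(L_N,R_N)$ is precisely its image under $T_N$. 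Hence by the contraction principle $(L_N,R_N)$ satisfies an LDP with good rate function
\[
J(\nu,\theta) = \inf\left\{ H(\rho\mid\mu\otimes\mu'\otimes\mu'') : \rho^{S\times S'}=\nu,\ \rho^{S\times S''}=\theta \right\},
\]
with the usual convention that the infimum over the empty set is $\infty$.

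The remaining work is to evaluate this infimum explicitly. Note first that the constraint set is nonempty iff $\nu$ and $\theta$ have a common first marginal, i.e.\ $\nu^{(1)}=\theta^{(1)}$; otherwise $J(\nu,\theta)=\infty$, matching the claimed formula. Assume then $\nu^{(1)}=\theta^{(1)}=:\lambda$. The key step is a chain-rule decomposition of the entropy. Using (\ref{Entropy_Add}) twice — first splitting off the $S$-marginal, then viewing $\mu'\otimes\mu''$ as a product over $S'\times S''$ — one writes, for any $\rho$ with the prescribed marginals,
\[
H(\rho\mid\mu\otimes\mu'\otimes\mu'') = H(\lambda\mid\mu) + H\bigl(\rho \mid \lambda\otimes\mu'\otimes\mu''\bigr),
\]
and then disintegrate $\rho$ over its first coordinate: writing $\rho(dx,dy,dz)=\lambda(dx)\,K_x(dy,dz)$ with $K_x$ a probability kernel on $S'\times S''$, a standard computation gives
\[
H\bigl(\rho \mid \lambda\otimes\mu'\otimes\mu''\bigr) = \int_S H\bigl(K_x \mid \mu'\otimes\mu''\bigr)\,\lambda(dx).
\]
For fixed $x$, $H(K_x\mid\mu'\otimes\mu'')$ is minimized, subject to $K_x$ having prescribed $S'$-marginal $\nu_x$ and $S''$-marginal $\theta_x$ (the disintegrations of $\nu$ and $\theta$), precisely by the product $K_x=\nu_x\otimes\theta_x$, for which $H(\nu_x\otimes\theta_x\mid\mu'\otimes\mu'')=H(\nu_x\mid\mu')+H(\theta_x\mid\mu'')$; this is the elementary fact that the product coupling minimizes relative entropy to a product measure among couplings with given marginals, which itself follows from $H(K_x\mid\mu'\otimes\mu'')=H(K_x\mid\nu_x\otimes\theta_x)+H(\nu_x\mid\mu')+H(\theta_x\mid\mu'')\ge H(\nu_x\mid\mu')+H(\theta_x\mid\mu'')$. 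Integrating back over $\lambda$ and using (\ref{Entropy_Add}) in reverse yields
\[
J(\nu,\theta) = H(\lambda\mid\mu) + \int H(\nu_x\mid\mu')\,\lambda(dx) + \int H(\theta_x\mid\mu'')\,\lambda(dx) = H(\nu^{(1)}\mid\mu) + H(\nu\mid\nu^{(1)}\otimes\mu') + H(\theta\mid\theta^{(1)}\otimes\mu''),
\]
which is the asserted rate function.

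The main technical obstacle is making the disintegration arguments rigorous on general Polish spaces: one needs the existence of regular conditional probabilities $K_x$ (available since all spaces are Polish), the measurability in $x$ of $x\mapsto H(K_x\mid\mu'\otimes\mu'')$ and of the minimizing kernel $x\mapsto \nu_x\otimes\theta_x$, and the validity of the integrated chain rule for relative entropy in this generality. These are standard but slightly delicate points; alternatively one can bypass part of this by establishing the lower bound via the variational representation (\ref{sup_representation}) and the upper bound via the explicit near-optimal coupling $\rho=\lambda(dx)\,\nu_x(dy)\,\theta_x(dz)$, verifying directly that it attains the claimed value. Lower semicontinuity and compactness of level sets of $J$ are then automatic from the contraction principle, though they also follow from Lemma \ref{lower_semicontinuity} applied to each of the three summands.
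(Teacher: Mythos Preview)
Your proposal is correct and follows essentially the same route as the paper: Sanov for the triple empirical measure, contraction to the pair of two-coordinate marginals, then identification of the infimum via the conditionally-independent coupling $\hat\rho(dx,dy,dz)=\lambda(dx)\,\nu_x(dy)\,\theta_x(dz)$. The one place the paper is slightly slicker is the lower bound $J\le J'$: instead of disintegrating fiberwise and worrying about measurability of $x\mapsto H(K_x\mid\mu'\otimes\mu'')$, it writes $H(\rho\mid\mu\otimes\mu'\otimes\mu'')=H(\rho\mid\hat\rho)+\int\log\frac{d\hat\rho}{d(\mu\otimes\mu'\otimes\mu'')}\,d\rho$ and observes that the log-density is a sum of a function of $(x,y)$ and a function of $(x,z)$, so its $\rho$-integral equals its $\hat\rho$-integral $=J(\nu,\theta)$, bypassing the technical obstacle you flagged.
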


\begin{proof}
We apply the Sanov theorem to the empirical measure
\[
M_{N}={\frac{1}{N}}\sum_{i=1}^{N}\delta_{(X_{i},Y_{i},Z_{i})}\in{{\mathcal{M}%
}}_{1}^{+}(S\times S^{\prime}\times S^{\prime\prime}).
\]
We use the two natural projections $p:S\times S^{\prime}\times S^{\prime
\prime}\rightarrow S\times S^{\prime}$ and $q:S\times S^{\prime}\times
S^{\prime\prime}\rightarrow S\times S^{\prime\prime}$. Then $(L_{N}%
,R_{N})=M_{N}(p,q)^{-1}$, and by continuous projection, we get that
$(L_{N},R_{N})$ satisfies a good LDP with rate function
\[
J^{\prime}(\nu,\theta)=\operatorname{inf}\left\{  H(\rho\mid\mu\otimes
\mu^{\prime}\otimes\mu^{\prime\prime}):\rho p^{-1}=\nu,\rho q^{-1}%
=\theta\right\}  .
\]
It only remains to identify this rate function with the function $J$ given above.

Clearly $J^{\prime}(\nu,\theta)=\infty$ if $\nu^{(1)}\neq\theta^{(1)}$.
Therefore, assume $\nu^{(1)}=\theta^{(1)}$. If we define $\hat{\rho}\left(
\nu,\theta\right)  \in\mathcal{M}_{1}^{+}\left(  S\times S^{\prime}\times
S^{\prime\prime}\right)  $ to have marginal $\nu^{(1)}=\theta^{(1)}$ on $S$,
and the conditional distribution on $S^{\prime}\times S^{\prime\prime}$ given
the first projection is the product of the conditional distributions of $\nu$
and $\theta$, then applying twice (\ref{Entropy_Add}), we get%
\[
H(\hat{\rho}\mid\mu\otimes\mu^{\prime}\otimes\mu^{\prime\prime})=H\left(
\nu^{(1)}\mid\mu\right)  +H\left(  \nu\mid\nu^{(1)}\otimes\mu^{\prime}\right)
+H\left(  \theta\mid\theta^{(1)}\otimes\mu^{\prime\prime}\right)  ,
\]
and therefore $J\geq J^{\prime}$.

To prove the other inquality, consider any $\rho$ satisfying $\rho p^{-1}%
=\nu,\rho q^{-1}=\theta$. We want to show that $J(\nu,\theta)\leq H\left(
\rho\mid\mu\otimes\mu^{\prime}\otimes\mu^{\prime\prime}\right)  $. For that,
we can assume that the right hand side is finite. Then%
\[
H\left(  \rho\mid\mu\otimes\mu^{\prime}\otimes\mu^{\prime\prime}\right)
=H\left(  \rho\mid\hat{\rho}\left(  \nu,\theta\right)  \right)  +\int
d\rho\log\frac{d\hat{\rho}\left(  \nu,\theta\right)  }{d\left(  \mu\otimes
\mu^{\prime}\otimes\mu^{\prime\prime}\right)  }.
\]
The first summand is $\geq0,$ and the second equals%
\[
\int d\hat{\rho}\left(  \nu,\theta\right)  \log\frac{d\hat{\rho}\left(
\nu,\theta\right)  }{d\left(  \mu\otimes\mu^{\prime}\otimes\mu^{\prime\prime
}\right)  }=J(\nu,\theta).
\]
So, we have proved that%
\[
J(\nu,\theta)\leq H\left(  \rho\mid\mu\otimes\mu^{\prime}\otimes\mu
^{\prime\prime}\right)  ,
\]
for any $\rho$ satisfying $\rho p^{-1}=\nu,\rho q^{-1}=\theta.$
\end{proof}

We now step back to the setting of Theorem \ref{Th_GREM_perceptron}: For
$j=1,\dots,n,$ we have sequences $\left\{  X_{\alpha_{1},\dots,\alpha_{j}%
,i}^{j}\right\}  $ of independent random variables with distribution $\mu_{j}%
$. We emphasize that henceforth $\mu=\mu_{1}\otimes\cdots\otimes\mu_{n}$ and
$\mu^{(j)}$ will denote the marginal on the first $k$ components. Moreover,
for $\alpha=(\alpha_{1},\dots,\alpha_{n})$, we write $\alpha^{(j)}=(\alpha
_{1},\dots,\alpha_{j})$ and set
\[
L_{N,\alpha^{(j)}}^{(j)}={\frac{1}{N}}\sum_{i=1}^{N}\delta_{\left(
X_{\alpha_{1},i}^{1},X_{\alpha_{1},\alpha_{2},i}^{2},\dots,X_{\alpha_{1}%
,\dots,\alpha_{j},i}^{j}\right)  },
\]
for $j\leq n$, which is the marginal of $L_{N,\alpha}$ on $S^{j}$. With the
notation
\begin{align*}
& X_{\alpha,i}^{(j)}\overset{\mathrm{def}}{=}\left(  X_{\alpha_{1},i}%
^{1},\dots,X_{\alpha_{1},\dots,\alpha_{j},i}^{j}\right)  ,\\
& \hat{X}_{\alpha,i}^{(j)}\overset{\mathrm{def}}{=}\left(  X_{\alpha_{1}%
,\dots,\alpha_{j+1},i}^{j+1},\dots,X_{\alpha_{1},\dots,\alpha_{n},i}%
^{n}\right)  ,
\end{align*}
we can write
\begin{equation}
L_{N,\alpha}\overset{\mathrm{def}}{=}{\frac{1}{N}}\sum_{i=1}^{N}%
\delta_{\left(  X_{\alpha,i}^{(j)},\hat{X}_{\alpha,i}^{(j)}\right)
}.\label{splitting_empirical}%
\end{equation}
For $A\subset{{\mathcal{M}}}_{1}^{+}(S^{n})$ we put $M_{N}(A)\overset
{\mathrm{def}}{=}\#\left\{  \alpha:L_{N,\alpha}\in A\right\}  $.

\begin{lemma}
\label{very_many} Assume $\nu\in{\mathcal{M}}_{1}^{+}(S^{n})$ satisfies
$H(\nu\mid\mu)<\infty$, and let $V$ be an open neighborhood of $\nu$, and
$\varepsilon>0$. Then there exists an open neighborhood $U$ of $\nu$,
$U\subset V$, and $\delta>0$ such that
\[
{\mathbb{P}}\Big [M_{N}(U)\geq\operatorname{exp}\left[  N\left(  \log
2-H(\nu\mid\mu)+\varepsilon\right)  \right]  \Big ]\leq\mathrm{e}^{-\delta N}.
\]

\end{lemma}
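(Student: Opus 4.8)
The plan is to obtain this upper-tail estimate by a crude first-moment (union) bound, so that the hierarchical dependence among the family $\{L_{N,\alpha}\}_{\alpha\in\Sigma_{N}}$ plays no role; that dependence would only matter for a matching lower bound. Since $M_{N}(U)$ is nonnegative, Markov's inequality gives
\[
\mathbb{P}\bigl[M_{N}(U)\geq e^{N(\log 2-H(\nu\mid\mu)+\varepsilon)}\bigr]\leq e^{-N(\log 2-H(\nu\mid\mu)+\varepsilon)}\,\mathbb{E}\,M_{N}(U),
\]
and $\mathbb{E}\,M_{N}(U)=\sum_{\alpha}\mathbb{P}[L_{N,\alpha}\in U]$. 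For each fixed $\alpha$ the variables $X_{\alpha,1},\dots,X_{\alpha,N}$ are i.i.d.\ with law $\mu$, so $\mathbb{P}[L_{N,\alpha}\in U]$ does not depend on $\alpha$, and the sum has $|\Sigma_{N}|=2^{N}$ terms. Hence it is enough to exhibit an open neighborhood $U\subset V$ of $\nu$ with
\[
\mathbb{P}[L_{N,\alpha}\in U]\leq e^{-N(H(\nu\mid\mu)-\varepsilon/2)}\qquad\text{for all large }N,
\]
since then $\mathbb{E}\,M_{N}(U)\leq e^{N(\log 2-H(\nu\mid\mu)+\varepsilon/2)}$ and the two displays combine to give the lemma with $\delta=\varepsilon/2$.

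To produce such a $U$, I would use the variational representation (\ref{sup_representation}) of the relative entropy together with an exponential Chebyshev bound. As $H(\nu\mid\mu)<\infty$, choose a bounded continuous $u\colon S^{n}\to\mathbb{R}$ with $\int u\,d\nu-\log\int e^{u}\,d\mu>H(\nu\mid\mu)-\varepsilon/4$. Since $\rho\mapsto\int u\,d\rho$ is weakly continuous and $u$ is bounded, there is an open neighborhood $U$ of $\nu$, $U\subset V$, such that $\int u\,d\rho>\int u\,d\nu-\varepsilon/4$ for every $\rho\in U$. Using $\mathbb{E}\exp\bigl[\sum_{i=1}^{N}u(X_{\alpha,i})\bigr]=\bigl(\int e^{u}\,d\mu\bigr)^{N}$ (independence of the $X_{\alpha,i}$ for fixed $\alpha$, each of law $\mu$), we then get
\[
\mathbb{P}[L_{N,\alpha}\in U]\leq e^{-N\inf_{\rho\in U}\int u\,d\rho}\,\mathbb{E}\,e^{N\int u\,dL_{N,\alpha}}\leq \exp\!\Bigl[-N\Bigl(\int u\,d\nu-\log\int e^{u}\,d\mu-\tfrac{\varepsilon}{4}\Bigr)\Bigr]\leq e^{-N(H(\nu\mid\mu)-\varepsilon/2)},
\]
which is exactly the required bound, and in fact it holds for every $N\geq1$, so no restriction to large $N$ is even needed here.

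I do not expect a genuine obstacle in this lemma: the only points requiring care are the order of the quantifiers — the test function $u$ must be fixed before the neighborhood $U$ is shrunk — and the (trivial) observation that Markov's inequality for the integer-valued $M_{N}(U)$ is insensitive to the correlations among the $L_{N,\alpha}$. The substantial part of Theorem \ref{Th_GREM_perceptron} is the complementary direction: showing that $M_{N}(U)$ is \emph{not} much smaller than $e^{N(\log 2-H(\nu\mid\mu))}$ when $\nu\in\bigcap_{j}\mathcal{R}_{j}$, and that $M_{N}(U)=0$ with overwhelming probability when $\nu\notin\bigcap_{j}\mathcal{R}_{j}$; those arguments genuinely use the GREM hierarchy (via the relative Sanov estimate of Lemma \ref{ldp_empirical_measure_couple}) and are where the constraints $H(\nu^{(j)}\mid\mu^{(j)})\leq\Gamma_{j}\log2$ come into play.
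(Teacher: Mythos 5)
Your proposal is correct and follows the same overall strategy as the paper: bound $\mathbb{E}\,M_{N}(U)$ by $2^{N}\sup_{\alpha}\mathbb{P}[L_{N,\alpha}\in U]$, make the single-$\alpha$ probability $\leq e^{-N(H(\nu\mid\mu)-\varepsilon/2)}$ by shrinking $U$, and finish with Markov's inequality; the correlations among the $L_{N,\alpha}$ are indeed irrelevant here. The only difference is local: where the paper invokes Sanov's upper bound after choosing a radius $r_{k}$ with $H(B_{r_{k}}(\nu)\mid\mu)=H(\operatorname{cl}B_{r_{k}}(\nu)\mid\mu)\geq H(\nu\mid\mu)-\varepsilon/4$, you re-derive exactly that estimate by fixing a test function $u$ from the Donsker--Varadhan representation (\ref{sup_representation}) and running a Chernoff bound, which spares you the open-versus-closed-ball bookkeeping and yields a bound valid for every $N$ rather than only large $N$. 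Both are fine; yours is marginally more self-contained, the paper's is marginally shorter given that Sanov is already in play throughout the section.
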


\begin{proof}
If $B_{r}(\nu)$ denotes the open $r$-ball around $\nu$ in one of the standard
metrics, e.g. the Prohorov metric, then by the semicontinuity property of the
relative entropy, on has%
\[
H(B_{r}(\nu)\mid\mu)\uparrow H(\nu\mid\mu)
\]
as $r\downarrow0.$ We can choose a sequence $r_{k}>0,r_{k}\downarrow0$ with
$H(B_{r_{k}}(\nu)\mid\mu)=H(\operatorname{cl}\left(  B_{r_{k}}(\nu)\right)
\mid\mu)\uparrow H(\nu\mid\mu)$. Given $\varepsilon>0,$ and $V,$ we can find
$k$ such that%
\[
H(B_{r_{k}}(\nu)\mid\mu)=H(\operatorname{cl}\left(  B_{r_{k}}(\nu)\right)
\mid\mu)\geq H(\nu\mid\mu)-\varepsilon/4
\]
and $B_{r_{k}}(\nu)\subset V.$ By Sanov's theorem we therefore get
\[
{\mathbb{P}}\Big [L_{N,\alpha}\in B_{r_{k}}(\nu)\Big ]\leq\operatorname{exp}%
\left[  N(-H(\nu\mid\mu)+\varepsilon/2)\right]  ,
\]
and therefore%
\[
{{\mathbb{E}}}\Big [M_{N}\left(  B_{r_{k}}(\nu)\right)  \Big ]\leq
\operatorname{exp}\left[  N(\log2-H(\nu\mid\mu)+\varepsilon/2)\right]  .
\]
By the Markov inequality, the claim follows by taking $\delta=\varepsilon/3.$
\end{proof}

\begin{lemma}
\label{still_existent} Assume $\nu\in{\mathcal{M}}_{1}^{+}(S^{n})$ satisfies
$H\left(  \nu^{(j)}\mid\mu^{(j)}\right)  >\Gamma_{j}\log2$ for some $j\leq n$,
and let $V$ be an open neighborhood of $\nu$. Then there is an open
neighborhood $U$ of $\nu$, $U\subset V$ and $\delta>0$ such that
\[
{\mathbb{P}}\big [M_{N}(U)\neq0\big ]\leq\mathrm{e}^{-\delta N}%
\]
for large enough $N$.
\end{lemma}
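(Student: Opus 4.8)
The plan is to bound the expected number of $\alpha$ with $L_{N,\alpha}\in U$ by a product of "branching'' and "survival'' probabilities, exploiting the hierarchical structure of the $\alpha$'s. Fix the index $j\le n$ for which $H(\nu^{(j)}\mid\mu^{(j)})>\Gamma_j\log2$; we only use the marginal on the first $j$ coordinates, so it suffices to show that already $M_N^{(j)}(U^{(j)})$, the number of truncated labels $\alpha^{(j)}=(\alpha_1,\dots,\alpha_j)$ with $L^{(j)}_{N,\alpha^{(j)}}$ in a suitable neighborhood $U^{(j)}$ of $\nu^{(j)}$, is zero with the stated probability. The total number of such truncated labels is $\prod_{k=1}^{j}2^{\gamma_k N}=2^{\Gamma_j N}$. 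By lower semicontinuity of $H(\cdot\mid\mu^{(j)})$ on ${\mathcal M}_1^+(S^j)$ (via the representation \eqref{sup_representation}), since $H(\nu^{(j)}\mid\mu^{(j)})>\Gamma_j\log2$ we may choose a closed ball $\bar B$ around $\nu^{(j)}$, contained in the projection of $V$, and an $\eta>0$ with $H(\bar B\mid\mu^{(j)})\ge \Gamma_j\log2+2\eta$. Let $U$ be the preimage under the projection $S^n\to S^j$ of the open ball $B$ with the same center and radius; then $U\subset V$ and $L_{N,\alpha}\in U$ forces $L^{(j)}_{N,\alpha^{(j)}}\in B\subset\bar B$.

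Now I estimate. For a single truncated label $\alpha^{(j)}$, the coordinates $X^k_{\alpha_1,\dots,\alpha_k,i}$ for $1\le k\le j$ and $1\le i\le N$ are i.i.d.\ with law $\mu^{(j)}=\mu_1\otimes\cdots\otimes\mu_j$, so $L^{(j)}_{N,\alpha^{(j)}}$ is an empirical measure of $N$ i.i.d.\ $\mu^{(j)}$-samples. By the upper bound in Sanov's theorem applied to the closed set $\bar B$,
\[
{\mathbb P}\big[L^{(j)}_{N,\alpha^{(j)}}\in \bar B\big]\le \exp\big[-N\big(H(\bar B\mid\mu^{(j)})-\eta\big)\big]\le \exp\big[-N(\Gamma_j\log2+\eta)\big]
\]
for large $N$. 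Summing over all $2^{\Gamma_j N}$ truncated labels and using Markov's inequality,
\[
{\mathbb P}\big[M_N(U)\ne0\big]\le {\mathbb E}\,M^{(j)}_N(\bar B)\le 2^{\Gamma_j N}\exp\big[-N(\Gamma_j\log2+\eta)\big]=\exp[-\eta N].
\]
Taking $\delta=\eta$ finishes the argument; note the set inclusion $\{M_N(U)\ne0\}\subset\{M^{(j)}_N(\bar B)\ge1\}$ is exactly what lets the first-moment bound close.

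The only delicate point is the counting of truncated labels together with the observation that the empirical measure $L^{(j)}_{N,\alpha^{(j)}}$ depends on $\alpha$ only through $\alpha^{(j)}$ — this is what collapses the sum over $|\Sigma_N|$-many $\alpha$ into a sum over only $2^{\Gamma_j N}$ terms, which is precisely the gain that beats the Sanov exponential decay rate $\Gamma_j\log2$. No independence between distinct $\alpha^{(j)}$'s is needed, since the first-moment bound is just a union bound over labels; the hierarchical correlations are irrelevant here. One should also make sure $\bar B$ can simultaneously be taken inside the projection of $V$ and have $H(\bar B\mid\mu^{(j)})$ close enough to $H(\nu^{(j)}\mid\mu^{(j)})$; this follows exactly as in the proof of Lemma \ref{very_many}, choosing a sequence of radii $r_k\downarrow0$ along which $H(\operatorname{cl}(B_{r_k})\mid\mu^{(j)})\uparrow H(\nu^{(j)}\mid\mu^{(j)})$.
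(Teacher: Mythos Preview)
Your argument is essentially the paper's own proof: pick a small neighborhood $U'$ of $\nu^{(j)}$ in $\mathcal{M}_1^+(S^j)$ on which the relative entropy stays strictly above $\Gamma_j\log 2$, pull it back to $\mathcal{M}_1^+(S^n)$, and combine Sanov's upper bound for a single $\alpha^{(j)}$ with a union bound over the $2^{\Gamma_j N}$ truncated labels. There is one slip in your construction of $U$: writing $\pi:\rho\mapsto\rho^{(j)}$ for the marginal map, the inclusion $\bar B\subset\pi(V)$ does \emph{not} imply $\pi^{-1}(B)\subset V$, since $\pi^{-1}(B)$ contains every measure with $j$-marginal in $B$, not only those lying in $V$ (indeed $\pi^{-1}(\pi(V))\supset V$, not $\subset V$). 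The paper handles this by simply setting $U=\{\rho\in\mathcal{M}_1^+(S^n):\rho\in V,\ \rho^{(j)}\in U'\}=V\cap\pi^{-1}(U')$, which is open, contains $\nu$, is contained in $V$, and still satisfies $\{L_{N,\alpha}\in U\}\subset\{L^{(j)}_{N,\alpha^{(j)}}\in U'\}$; with this correction your estimate goes through verbatim.
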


\begin{proof}
As in the previous lemma, we choose a neighborhood $U^{\prime}$ of $\nu^{(j)}$
in $S^{j}$ such that $H(\operatorname{cl}\left(  U^{\prime}\right)  \mid
\mu^{(j)})=H(U^{\prime}\mid\mu^{(j)})>\Gamma_{j}\log2+\eta,$ for some
$\eta>0.$ Then we put
\[
U\overset{\mathrm{def}}{=}\left\{  \nu\in{\mathcal{M}}_{1}^{+}(S^{n}):\nu\in
V,\nu^{(j)}\in U^{\prime}\right\}  .
\]
If $L_{N,\alpha}\in U$ then $L_{N,\alpha}^{(j)}\in U^{\prime}$,
\begin{align*}
{\mathbb{P}}\left[  \exists\alpha:L_{N,\alpha}\in U\right]   & \leq
{\mathbb{P}}\left[  \exists\alpha:L_{N,\alpha}^{(j)}\in U^{\prime}\right] \\
& \leq2^{\Gamma_{j}N}{\mathbb{P}}\left[  L_{N,\alpha}^{(j)}\in U^{\prime
}\right] \\
& \leq2^{\Gamma_{j}N}\operatorname{exp}\left[  -NH\left(  \operatorname{cl}%
\left(  U^{\prime}\right)  \mid\mu^{(j)}\right)  +N\eta/2\right] \\
& \leq2^{\Gamma_{j}N}\operatorname{exp}\left[  -N\Gamma_{j}\log2-N\eta
/2\right]  =\mathrm{e}^{-N\eta/2}.
\end{align*}
This proves the claim.
\end{proof}

\begin{lemma}
\label{second_mom} Assume that $\nu\in{\mathcal{M}}_{1}^{+}(S^{n})$ satisfies
$H\left(  \nu^{(j)}\mid\mu^{(j)}\right)  <\Gamma_{j}\log2$ for all $j$, and
let $V$ be an open neighborhood of $\nu$, and $\varepsilon>0$. Then there
exists an open neighborhood $U$ of $\nu$, $U\subset V$, and a $\delta>0$ such
that
\[
{\mathbb{P}}\Big [M_{N}(U)\leq\operatorname{exp}\left[  N\left(  \log
2-H(\nu\mid\mu)-\varepsilon\right)  \right]  \Big ]\leq\mathrm{e}^{-\delta N}.
\]

\end{lemma}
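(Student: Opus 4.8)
The goal is a lower bound on the number of indices $\alpha$ with $L_{N,\alpha}\in U$, valid with overwhelming probability, when $\nu$ sits strictly inside all the constraint sets $\mathcal R_j$. The natural route is a \emph{conditional second moment method}, carried out coordinate by coordinate along the hierarchy $j=1,\dots,n$. Write $\alpha=(\alpha_1,\dots,\alpha_n)$ with $\alpha_k$ ranging over $2^{\gamma_k N}$ values, and recall the decomposition \eqref{splitting_empirical}: given the first-level variables $X^1_{\alpha_1,i}$, the second-level families $X^2_{\alpha_1,\alpha_2,i}$ are fresh i.i.d.\ copies, and so on. The plan is to fix a product neighbourhood $U=\{\nu':\nu'\in V,\ \nu'^{(j)}\in U_j\ \forall j\}$ with each $U_j$ a small neighbourhood of $\nu^{(j)}$ in $\mathcal M_1^+(S^j)$ chosen (using lower semicontinuity of relative entropy, as in Lemma \ref{very_many}) so that $H(U_j\mid\mu^{(j)})$ differs from $H(\nu^{(j)}\mid\mu^{(j)})$ by at most $\varepsilon/(10n)$, and likewise $H(\mathrm{cl}(U_j)\mid\mu^{(j)})$ is close to the same value from above; shrink further so $U\subset V$.

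\textbf{Key steps.} First, at level $1$: by Sanov, $\mathbb P[L^{(1)}_{N,\alpha_1}\in U_1]\approx \exp[-N H(\nu^{(1)}\mid\mu^{(1)})]$ up to $e^{\pm N\varepsilon/(10n)}$, and a one-level second moment computation (the $\alpha_1$ being independent) shows that the count $M^{(1)}_N$ of level-1 indices landing in $U_1$ is, with probability $\ge 1-e^{-\delta_1 N}$, at least $\exp[N(\gamma_1\log 2 - H(\nu^{(1)}\mid\mu^{(1)}) - \varepsilon/(5n))]$; this is positive precisely because $\nu\in\mathcal R_1$ with strict inequality. Then inductively: conditionally on a ``good'' level-$(j-1)$ index $\alpha^{(j-1)}$, the variables $X^j_{\alpha^{(j-1)},\alpha_j,i}$ over the $2^{\gamma_j N}$ choices of $\alpha_j$ are i.i.d., and one applies the relative Sanov theorem (Lemma \ref{ldp_empirical_measure_couple}) to control the joint law of the already-built empirical measure on $S^{j-1}$ and the extension to $S^j$: the conditional probability that the extended empirical measure lies in $U_j$ is $\approx\exp[-N H(\nu^{(j)}\mid\nu^{(j-1)}\otimes\mu_j)]=\exp[-N(H(\nu^{(j)}\mid\mu^{(j)})-H(\nu^{(j-1)}\mid\mu^{(j-1)}))]$ by the additivity \eqref{Entropy_Add}. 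A second moment estimate over the $\alpha_j$'s (using that branches with distinct $\alpha_j$ use disjoint, hence independent, randomness given level $j-1$) then yields that the number of good level-$j$ extensions of each good level-$(j-1)$ index is $\ge\exp[N(\gamma_j\log 2-(H(\nu^{(j)}\mid\mu^{(j)})-H(\nu^{(j-1)}\mid\mu^{(j-1)}))-\varepsilon/(5n))]$ with conditional probability $\ge 1-e^{-\delta_j N}$. Multiplying the per-level growth factors telescopes: the total count $M_N(U)$ is at least $\exp[N(\sum_j\gamma_j\log 2 - H(\nu^{(n)}\mid\mu^{(n)}) - \varepsilon)] = \exp[N(\log 2 - H(\nu\mid\mu)-\varepsilon)]$, off an event of probability $\le\sum_j e^{-\delta_j N}\le e^{-\delta N}$. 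The strict inequalities $H(\nu^{(j)}\mid\mu^{(j)})<\Gamma_j\log 2$ guarantee each partial product $\sum_{k\le j}\gamma_k\log 2 - H(\nu^{(j)}\mid\mu^{(j)})>0$, so the surviving population never collapses and there are (exponentially) many good indices at every level to run the next conditional estimate on.

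\textbf{Main obstacle.} The delicate point is the conditional second moment bound at a generic level $j$: one must show not merely that the \emph{expected} number of good extensions is large, but that it concentrates, and the covariance between two extensions $\alpha_j\ne\alpha_j'$ sharing the same prefix must be handled. Because distinct $\alpha_j$ invoke independent variables given the prefix, the two indicator events are conditionally independent, so the conditional variance is controlled by the conditional first moment alone — the usual Paley--Zygmund argument applies cleanly. The real bookkeeping difficulty is \emph{uniformity}: the estimate at level $j$ must hold simultaneously for \emph{all} (exponentially many) good prefixes $\alpha^{(j-1)}$, which one arranges by a union bound, absorbing the entropy factor $e^{N\Gamma_{j-1}\log 2}$ into the exponentially small failure probability $e^{-\delta_j' N}$ — this forces $\delta_j'$ to be taken strictly smaller than the naive large-deviation rate, and one checks the strict constraint $H(\nu^{(j-1)}\mid\mu^{(j-1)})<\Gamma_{j-1}\log 2$ leaves room for this. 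A secondary technicality is that the relative Sanov theorem gives the asymptotic rate, so the desired per-level inequality holds only for $N$ large; since $n$ is fixed, finitely many such thresholds are merged into one. Once these uniform conditional estimates are in place, the telescoping multiplication is routine.
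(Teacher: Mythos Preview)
Your inductive, per-prefix scheme has a genuine gap. You argue that for each good level-$(j-1)$ prefix $\alpha^{(j-1)}$, the number of good extensions $\alpha_j$ is, with conditional probability $\ge 1-\mathrm{e}^{-\delta_j N}$, at least $\exp\bigl[N\bigl(\gamma_j\log 2-(H(\nu^{(j)}\mid\mu^{(j)})-H(\nu^{(j-1)}\mid\mu^{(j-1)}))-\varepsilon/(5n)\bigr)\bigr]$, and then you multiply these growth factors. For this to be useful, the exponent must be positive, i.e.\ you need the \emph{per-level} conditional-entropy increment to satisfy
\[
H\bigl(\nu^{(j)}\mid\mu^{(j)}\bigr)-H\bigl(\nu^{(j-1)}\mid\mu^{(j-1)}\bigr)\;<\;\gamma_j\log 2 .
\]
But the hypotheses only give the \emph{cumulative} constraints $H(\nu^{(j)}\mid\mu^{(j)})<\Gamma_j\log 2$, which do \emph{not} imply the per-level ones. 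Concretely, take $n=2$, $\gamma_1=\gamma_2=\tfrac12$, and $\nu$ with $H(\nu^{(1)}\mid\mu_1)=0.3\log 2$ and $H(\nu\mid\mu)=0.9\log 2$. Both hypotheses hold strictly ($0.3<0.5$ and $0.9<1$), yet the level-2 increment is $0.6\log 2>\gamma_2\log 2=0.5\log 2$. In this regime the conditional expected number of good $\alpha_2$'s per good $\alpha_1$ is $2^{\gamma_2 N}\mathrm{e}^{-0.6N\log 2}=2^{-0.1N}\to 0$: a typical good prefix has \emph{no} good extension, and your Paley--Zygmund step at level~2 yields nothing. The total count $M_N(U)\approx 2^{0.1N}$ is still large, but the mass is carried by a negligible fraction of atypical prefixes, so the telescoping product of per-prefix counts cannot recover it.

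The paper avoids this by performing a \emph{single global} second moment estimate on $M_N(U)$ itself. One expands $\mathbb{E}[M_N(U)^2]$ and splits the double sum over $(\alpha,\alpha')$ according to the overlap $q(\alpha,\alpha')=\max\{j:\alpha^{(j)}=\alpha'^{(j)}\}$. The $q=0$ part is $\le(\mathbb{E}M_N(U))^2$; for $q=j\ge 1$ one uses Lemma~\ref{ldp_empirical_measure_couple} to bound $\mathbb{P}[L_{N,\alpha}\in U,\,L_{N,\alpha'}\in U]$ by $\exp[-N(2H(\nu\mid\mu)-H(\nu^{(j)}\mid\mu^{(j)}))+o(N)]$, and the combinatorial factor $2^{(2-\Gamma_j)N}$ together with the cumulative hypothesis $H(\nu^{(j)}\mid\mu^{(j)})<\Gamma_j\log 2-\eta$ then makes each such block $\le \mathrm{e}^{-N\eta/4}(\mathbb{E}M_N(U))^2$. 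This yields $\operatorname{var}[M_N(U)]\le \mathrm{e}^{-2N\delta}(\mathbb{E}M_N(U))^2$ and Chebyshev finishes. The point is that only the cumulative constraints enter, exactly matching the hypotheses; no per-level positivity is required.
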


\begin{proof}
We claim that we can find $U$ as required, and some $\delta>0,$ such that
\begin{equation}
{\operatorname{var}}\left[  M_{N}(U)\right]  \leq\mathrm{e}^{-2N\delta
}\left\{  {{\mathbb{E}}}\left[  M_{N}(U)\right]  \right\}  ^{2}%
\label{less_evident_sanov}%
\end{equation}
From this estimate, we easily get the claim: From Sanov's theorem, we have for
any $\chi>0$%
\begin{equation}
\mathbb{E}M_{N}(U)=2^{N}\mathbb{P}\left(  L_{N,\alpha}\in U\right)  \geq
\exp\left[  N\left(  \log2-H(\nu\mid\mu)-\chi\right)  \right]
.\label{Est_Exp_below}%
\end{equation}
Using this, we get by taking $\chi=\varepsilon/2$%
\begin{align*}
& {\mathbb{P}}\left(  M_{N}(U)\leq\mathrm{e}^{N\left(  \log2-H(\nu\mid
\mu)-\varepsilon\right)  }\right) \\
& ={\mathbb{P}}\left(  M_{N}(U)-\mathbb{E}M_{N}(U)\leq\mathrm{e}%
^{-N\varepsilon/2}\mathrm{e}^{N\left(  \log2-H(\nu\mid\mu)-\varepsilon
/2\right)  }-\mathbb{E}M_{N}(U)\right) \\
& \leq{\mathbb{P}}\left(  M_{N}(U)-\mathbb{E}M_{N}(U)\leq\left(
\mathrm{e}^{-N\varepsilon/2}-1\right)  \mathbb{E}M_{N}(U)\right) \\
& \leq{\mathbb{P}}\left(  M_{N}(U)-\mathbb{E}M_{N}(U)\leq-\frac{1}%
{2}\mathbb{E}M_{N}(U)\right) \\
& \leq{\mathbb{P}}\left(  \left\vert M_{N}(U)-\mathbb{E}M_{N}(U)\right\vert
\geq\frac{1}{2}\mathbb{E}M_{N}(U)\right) \\
& \leq4\frac{{\operatorname{var}}\left[  M_{N}(U)\right]  }{\left\{
\mathbb{E}M_{N}(U)\right\}  ^{2}}\leq4\mathrm{e}^{-2N\delta}\leq
\mathrm{e}^{-\delta N}.
\end{align*}

So it remains to prove (\ref{less_evident_sanov}). We first claim that for any
$j$%
\begin{align}
& \lim_{r\rightarrow0}\operatorname{inf}_{\rho,\theta\in{\operatorname{cl}%
}B_{r}(\nu):\rho^{(j)}=\theta^{(j)}}\left\{  H(\rho\mid\mu)+H\left(
\theta\mid\theta^{(j)}\otimes\hat{\mu}^{(j)}\right)  \right\}
\label{lim_balls}\\
& =H(\nu\mid\mu)+H\left(  \nu\mid\nu^{(j)}\otimes\hat{\mu}^{(j)}\right)
,\nonumber
\end{align}
where $\hat{\mu}^{(j)}\overset{\mathrm{def}}{=}\mu_{j+1}\otimes\cdots
\otimes\mu_{n}$. The inequality $\leq$ is evident by taking $\rho=\theta=\nu$,
and the opposite follows from the semicontinuity properties: One gets that for
a sequence $(\rho_{n},\theta_{n})$ with $\rho_{n}^{(j)}=\theta_{n}^{(j)}$ and
$\rho_{n},\theta_{n}\rightarrow\nu$, we have
\begin{align*}
\liminf_{n\rightarrow\infty}H\left(  \rho_{n}\mid\mu\right)   & \geq H(\nu
\mid\mu),\\
\liminf_{n\rightarrow\infty}H\left(  \theta_{n}\mid\theta_{n}^{(j)}\otimes
\hat{\mu}^{(j)}\right)   & \geq H\left(  \nu\mid\nu^{(j)}\otimes\hat{\mu
}^{(j)}\right)  ,
\end{align*}
the first inequality by the standard semi-continuity, and the second by Lemma
\ref{lower_semicontinuity}. This proves (\ref{lim_balls}).

Choose $\eta>0$ such that $H\left(  \nu^{(j)}\mid\mu^{(j)}\right)  <\Gamma
_{j}\log2-\eta$, for all $1\leq j\leq n$. By (\ref{lim_balls}) we may choose
$r$ small enough such that ${\operatorname{cl}}B_{r}(\nu)\subset V,$ and for
all $1\leq j\leq n$,
\begin{align*}
& \operatorname{inf}_{\rho,\theta\in{\operatorname{cl}}B_{r}(\nu):\rho
^{(j)}=\theta^{(j)}}\left\{  H(\rho\mid\mu)+H\left(  \theta\mid\theta
^{(j)}\otimes\hat{\mu}^{(j)}\right)  \right\} \\
& \geq H(\nu\mid\mu)+H\left(  \nu\mid\nu^{(j)}\otimes\hat{\mu}^{(j)}\right)
-\eta/2\\
& =2H(\nu\mid\mu)-H\left(  \nu^{(j)}\mid\mu^{(j)}\right)  -\eta/2\\
& \geq2H(\nu\mid\mu)-\Gamma_{j}\log2+{\eta/2}.
\end{align*}
For two indices $\alpha,\alpha^{\prime}$ we write $q(\alpha,\alpha^{\prime
})\overset{\mathrm{def}}{=}\max\left\{  j:\alpha^{(j)}=\alpha^{\prime
(j)}\right\}  $ with $\max\emptyset\overset{\mathrm{def}}{=}0$. Then
\begin{align*}
{{\mathbb{E}}}{M_{N}^{2}(U)}  & =\sum_{j=0}^{n}\sum_{\alpha,\alpha^{\prime
}:q(\alpha,\alpha^{\prime})=j}{\mathbb{P}}\left[  L_{N,\alpha}\in
U,L_{N,\alpha^{\prime}}\in U\right] \\
& =\sum_{\alpha,\alpha^{\prime}:q(\alpha,\alpha^{\prime})=0}{\mathbb{P}%
}\left[  L_{N,\alpha}\in U\right]  {\mathbb{P}}\left[  L_{N,\alpha^{\prime}%
}\in U\right] \\
& +\sum_{j=1}^{n}\sum_{\alpha,\alpha^{\prime}:q(\alpha,\alpha^{\prime}%
)=j}{\mathbb{P}}\left[  L_{N,\alpha}\in U,L_{N,\alpha^{\prime}}\in U\right] \\
& \leq{{\mathbb{E}}}[M_{N}({\operatorname{cl}}U)]^{2}+\\
& +\sum_{j=1}^{n}\sum_{\alpha,\alpha^{\prime}:q(\alpha,\alpha^{\prime}%
)=j}{\mathbb{P}}\left[  L_{N,\alpha}\in{\operatorname{cl}}U,L_{N,\alpha
^{\prime}}\in{\operatorname{cl}}U\right]  .
\end{align*}
We write the empirical measure in the form (\ref{splitting_empirical}), and
use Lemma \ref{ldp_empirical_measure_couple}. For any $1\leq j\leq n$ we have%
\begin{align*}
& \sum_{\alpha,\alpha^{\prime}:q(\alpha,\alpha^{\prime})=j}{\mathbb{P}}\left[
L_{N,\alpha}\in\operatorname{cl}U,L_{N,\alpha^{\prime}}\in\operatorname{cl}%
U\right] \\
& =2^{\Gamma_{j}N}2^{(1-\Gamma_{j})N}\left(  2^{(1-\Gamma_{j})N}-1\right)
{\mathbb{P}}\left[  L_{N,\alpha}\in\operatorname{cl}U,L_{N,\alpha^{\prime}}%
\in\operatorname{cl}U\right]  ,
\end{align*}
where on the right hand side $\alpha,\alpha^{\prime}$ is an arbitrary pair
with $q(\alpha,\alpha^{\prime})=j$. Using Lemma
\ref{ldp_empirical_measure_couple} we have
\begin{align*}
& {\mathbb{P}}\left[  L_{N,\alpha}\in\operatorname{cl}U,\;L_{N,\alpha}%
\in\operatorname{cl}U\right] \\
& \leq\operatorname{exp}\Bigg [-N\operatorname{inf}_{\rho,\theta
\in\operatorname{cl}U,\rho^{(j)}=\theta^{(j)}}\Big \{H\left(  \rho^{(j)}%
\mid\mu^{(j)}\right)  +\\
& +H\left(  \rho\mid\rho^{(j)}\otimes\hat{\mu}^{(j)}\right)  +H\left(
\theta\mid\theta^{(j)}\otimes\hat{\mu}^{(j)}\right)  \Big \}+{\frac{N\eta}{4}%
}\Bigg ]\\
& =\operatorname{exp}\left[  -N\operatorname{inf}_{\rho,\theta\in
\operatorname{cl}U,\rho^{(j)}=\theta^{(j)}}\left\{  H(\rho\mid\mu)+H\left(
\theta\mid\theta^{(j)}\otimes\hat{\mu}^{(j)}\right)  \right\}  +{\frac{N\eta
}{4}}\right] \\
& \leq2^{\Gamma_{j}N}\operatorname{exp}\left[  -2NH(\nu\mid\mu)-{\frac{N\eta
}{4}}\right]  ,
\end{align*}
and thus
\[
\sum_{\alpha,\alpha^{\prime}:q(\alpha,\alpha^{\prime})=j}{\mathbb{P}}\left[
L_{N,\alpha}\in\operatorname{cl}U,\;L_{N,\alpha}\in\operatorname{cl}U\right]
\leq2^{2N}\operatorname{exp}\left[  -2NH(\nu\mid\mu)-{\frac{N\eta}{4}}\right]
.
\]
Combining, we obtain by taking $\chi=\eta/16$ in (\ref{Est_Exp_below})%
\[
\operatorname{var}\left[  M_{N}(U)\right]  \leq2^{2N}\operatorname{exp}\left[
-2NH(\nu\mid\mu)-{\frac{N\eta}{4}}\right]  \leq\mathrm{e}^{-N\eta
/8}{{\mathbb{E}}}[M_{N}(U)]^{2},
\]
which proves our claim.
\end{proof}

\begin{proof}
[Proof of Theorem \ref{Th_GREM_perceptron}]We set
\[
{\mathcal{G}}\overset{\mathrm{def}}{=}\left\{  \nu\in{\mathcal{M}}_{1}%
^{+}(S^{n}):H\left(  \nu^{(j)}\mid\mu^{(j)}\right)  \leq\Gamma_{j}%
\log2,\ j=1,\dots,n\right\}  ,
\]
which is a compact set.

\textit{Step 1.} We first prove the lower bound. By compactness of
${\mathcal{G}}$ and the semicontinuity of $H$ there exists $\nu_{0}%
\in{\mathcal{G}}$ such that
\[
\sup_{\nu\in{\mathcal{G}}}\left\{  \Phi(\nu)-H(\nu\mid\mu)\right\}  =\Phi
(\nu_{0})-H(\nu_{0}\mid\mu).
\]
We set $\nu_{\lambda}\overset{\mathrm{def}}{=}(1-\lambda)\nu_{0}+\lambda\mu$
for $0<\lambda<1$. By convexity of $H(\nu\mid\mu)$ in $\nu$ we see that
$H\left(  \nu_{\lambda}^{(j)}\mid\mu^{(j)}\right)  <\Gamma_{j}\log2$ for all
$1\leq j\leq n$. Furthermore $\nu_{\lambda}\rightarrow\nu_{0}$ weakly as
$\lambda\rightarrow0$, and $\Phi(\nu_{\lambda})\rightarrow\Phi(\nu_{0})$,
$H(\nu_{\lambda}\mid\mu)\rightarrow H(\nu_{0}\mid\mu)$.

Given $\varepsilon>0$ we choose $\lambda>0$ such that
\[
\Phi(\nu_{\lambda})-H(\nu_{\lambda}\mid\mu)\geq\Phi(\nu_{0})-H(\nu_{0}\mid
\mu)-\varepsilon.
\]
By the continuity of $\Phi$ and Lemma \ref{second_mom} we find a neighborhood
$U$ of $\nu_{\lambda}$, and $\delta>0$ such that
\[
\Phi(\theta)-\Phi(\nu_{\lambda})\leq\varepsilon,\ \theta\in U,
\]
and
\[
{\mathbb{P}}\left[  M_{N}(U)\leq2^{N}\operatorname{exp}\left[  -NH(\nu
_{\lambda}\mid\mu)-N\varepsilon\right]  \right]  \leq\mathrm{e}^{-\delta N},
\]
Then, with probability greater than $1-\mathrm{e}^{-\delta N}$,%
\begin{align*}
Z_{N}  & =2^{-N}\sum_{\alpha}\operatorname{exp}\left[  N\Phi(L_{N,\alpha
})\right] \\
& \geq2^{-N}\sum_{\alpha:L_{N,\alpha}\in U}\operatorname{exp}\left[
N\Phi(L_{N,\alpha})\right] \\
& \geq\operatorname{exp}\left[  N\Phi(\nu_{\lambda})-N\varepsilon\right]
\operatorname{exp}\left[  -NH(\nu_{\lambda}\mid\mu)-N\varepsilon\right] \\
& \geq\operatorname{exp}\left[  N\sup_{\nu\in{\mathcal{G}}}\left\{  \Phi
(\nu)-H(\nu\mid\mu)\right\}  -3N\varepsilon\right]  .
\end{align*}
By Borel-Cantelli, we therefore get, as $\varepsilon$ is arbitrary,
\[
\liminf_{N\rightarrow\infty}{\frac{1}{N}}\log Z_{N}\geq\sup_{\nu
\in{\mathcal{G}}}\left\{  \Phi(\nu)-H(\nu\mid\mu)\right\}
\]
almost surely.

\textit{Step 2.} We prove the upper bound. Let again $\varepsilon>0$ and set
\[
\overline{{\mathcal{G}}}\overset{\mathrm{def}}{=}\{\nu:H(\nu\mid\mu)\leq
\log2\}.
\]
If $\nu\in{\mathcal{G}}$ we choose $r_{\nu}>0$ such that $\left\vert
\Phi(\theta)-\Phi(\nu)\right\vert \leq\varepsilon$, $\theta\in B_{r_{\nu}}%
(\nu)$ and
\[
{\mathbb{P}}\left[  M_{N}(B_{r_{\nu}}(\nu))\geq2^{N}\operatorname{exp}\left[
-NH(\nu\mid\mu)+N\varepsilon\right]  \right]  \leq\mathrm{e}^{-N\delta_{\nu}},
\]
for some $\delta_{\nu}>0$ and large enough $N$ (using Lemma \ref{very_many}).
If $\nu\in\overline{{\mathcal{G}}}\setminus{\mathcal{G}}$ we choose $r_{\nu}$
such that $\left\vert \Phi(\theta)-\Phi(\nu)\right\vert \leq\varepsilon$,
$\theta\in B_{r_{\nu}}(\nu)$, and
\begin{equation}
{\mathbb{P}}\left[  M_{N}(B_{r_{\nu}}(\nu))\neq0\right]  \leq\mathrm{e}%
^{-N\delta_{\nu}},\label{still_existent_two}%
\end{equation}
again for large enough $N$ (and by Lemma \ref{still_existent}). As
$\overline{{\mathcal{G}}}$ is compact, we can cover it by a finite union of
such balls, i.e.
\[
\overline{{\mathcal{G}}}\subset U\overset{\mathrm{def}}{=}\bigcup_{j=1}%
^{m}B_{r_{j}}(\nu_{j}),
\]
where $r_{j}\overset{\mathrm{def}}{=}r_{\nu_{j}}$. We also set $\delta
\overset{\mathrm{def}}{=}\min_{j}\delta_{\nu_{j}}$. We then estimate
\begin{equation}
Z_{N}\leq2^{-N}\sum_{l=1}^{m}\sum_{\alpha:L_{N,\alpha}\in B_{r_{l}}(\nu_{l}%
)}\operatorname{exp}\left[  N\Phi(L_{N,\alpha})\right]  +2^{-N}\sum
_{\alpha:L_{N,\alpha}\notin U}\operatorname{exp}\left[  N\Phi(L_{N,\alpha
})\right]  .\label{upper_bound_abstract}%
\end{equation}
we first claim that almost surely the second summand vanishes provided $N$ is
large enough, i.e. that there is no $\alpha$ with $L_{N,\alpha}\notin U$. By
Sanov's theorem, we have
\[
\limsup_{N\rightarrow\infty}{\frac{1}{N}}\log{\mathbb{P}}\left[  L_{N,\alpha
}\notin U\right]  \leq-\operatorname{inf}_{\nu\notin U}H(\nu\mid\mu)<-\log2.
\]
Therefore, almost surely, there is no $\alpha$ with $L_{N,\alpha}\notin U$,
and therefore the second summand in (\ref{upper_bound_abstract}) vanishes for
large enough $N$, almost surely. The same applies to those summands in the
first part for which $\nu_{l}\notin{\mathcal{G}}$, using
(\ref{still_existent_two}). We therefore have, almost surely, for large enough
$N$,
\begin{align*}
Z_{N}  & \leq2^{-N}\sum_{l:\nu_{l}\in{\mathcal{G}}}\sum_{\alpha:L_{N,\alpha
}\in B_{r_{l}}(\nu_{l})}\operatorname{exp}\left[  N\Phi(L_{N,\alpha})\right]
\\
& \leq\mathrm{e}^{N\varepsilon}\sum_{l:\nu_{l}\in{\mathcal{G}}}%
\operatorname{exp}\left[  N\Phi(\nu_{l})\right]  M_{N}(B_{r_{l}}(\nu_{l}))\\
& \leq\mathrm{e}^{2N\varepsilon}\sum_{l:\nu_{l}\in{\mathcal{G}}}%
\operatorname{exp}\left[  N\Phi(\nu_{l})\right]  \operatorname{exp}\left[
-NH(\nu_{l}\mid\mu)\right] \\
& \leq\mathrm{e}^{2N\varepsilon}m\operatorname{exp}\left[  N\sup_{\nu
\in{\mathcal{G}}}\left\{  \Phi(\nu)-H(\nu\mid\mu)\right\}  \right]  .
\end{align*}
As $\varepsilon$ is arbitrary, we get
\[
\limsup_{N\rightarrow\infty}{\frac{1}{N}}\log Z_{N}\leq\sup_{\nu
\in{\mathcal{G}}}\left[  \Phi(\nu)-H(\nu\mid\mu)\right]  .
\]
This finishes the proof of Theorem \ref{Th_GREM_perceptron}.
\end{proof}

\subsection{The dual representation. Proof of the Theorem
\ref{Th_Parisi_formula}}

We define a family $\mathcal{G}\left(  \phi\right)  =\left\{  G_{\phi
,\mathbf{m}}\right\}  $ of probability distributions on $S^{n}$ which depend
on the parameter $\mathbf{m}=\left(  m_{1},\ldots,m_{n}\right)  \in\Delta.$
The probability measure $G=G_{\phi,\mathbf{m}}$ is described by a
\textquotedblleft starting\textquotedblright\ measure $\gamma$ on $S,$ and for
$2\leq j\leq n$ Markov kernels $K_{j}$ from $S^{j-1}$ to $S,$ so that $G$ is
the semi-direct product%
\[
G=\gamma\otimes K_{2}\otimes\cdots\otimes K_{n}.
\]%
\[
\gamma\left(  dx\right)  \overset{\mathrm{def}}{=}\frac{\exp\left[  m_{1}%
\phi_{1}\left(  x\right)  \right]  \mu_{1}\left(  dx\right)  }{\exp\left[
m_{1}\phi_{0}\right]  },
\]%
\[
K_{j}\left(  \mathbf{x}^{\left(  j-1\right)  },dx_{j}\right)  \overset
{\mathrm{def}}{=}\frac{\exp\left[  m_{j}\phi_{j}\left(  \mathbf{x}^{\left(
j\right)  }\right)  \right]  \mu_{j}\left(  dx_{j}\right)  }{\exp\left[
m_{j}\phi_{j-1}\left(  \mathbf{x}^{\left(  j-1\right)  }\right)  \right]  },
\]
where we write $\mathbf{x}^{\left(  j\right)  }\overset{\mathrm{def}}%
{=}\left(  x_{j},\ldots,x_{j}\right)  .$ Remember the definition of the
function $\phi_{j}:S^{j}\rightarrow\mathbb{R}$ in (\ref{Def_phin}),
(\ref{Def_phij}). It should be remarked that these objects are defined for all
$\mathbf{m}\in\mathbb{R}^{n}$, and not just for $\mathbf{m}\in\Delta.$ We also
write%
\[
G^{\left(  j\right)  }\overset{\mathrm{def}}{=}\gamma\otimes K_{2}%
\otimes\cdots\otimes K_{j}%
\]
which is the marginal of $G$ on $S^{j}.$ In order to emphasize the dependence
on $\mathbf{m},$ we occasionally will write $\phi_{j,\mathbf{m}}%
,\ \gamma_{\mathbf{m}},\ K_{j,\mathbf{m}}$ etc.

We remark that by a simple computation%
\begin{gather}
\int H\left(  K_{j}\left(  \mathbf{x}^{\left(  j-1\right)  },\cdot\right)
\mid\mu_{j}\right)  G^{\left(  j-1\right)  }\left(  d\mathbf{x}^{\left(
j-1\right)  }\right) \label{PAR1}\\
=m_{j}\left[  \int\phi_{j}dG^{\left(  j\right)  }-\int\phi_{j-1}dG^{\left(
j-1\right)  }\right]  .\nonumber
\end{gather}
$\phi_{j},\ldots,\phi_{n}$ do not depend on $m_{j},$ but $\phi_{0},\ldots
,\phi_{j-1}$ do. Differentiating the equation%
\[
\mathrm{e}^{m_{r+1}\phi_{r}}=\int\mathrm{e}^{m_{r+1}\phi_{r+1}}d\mu_{r+1}%
\]
with respect to $m_{j},$ we get for $0\leq r\leq j-2$%
\begin{equation}
\frac{\partial\phi_{r}\left(  \mathbf{x}^{\left(  r\right)  }\right)
}{\partial m_{j}}=\int\frac{\partial\phi_{r+1}\left(  \mathbf{x}^{\left(
r\right)  },x_{r+1}\right)  }{\partial m_{j}}K_{r+1}\left(  d\mathbf{x}%
^{\left(  r\right)  },x_{r+1}\right)  ,\label{PAR2}%
\end{equation}
and for $r=j-1$%
\[
\phi_{j-1}\mathrm{e}^{m_{j}\phi_{j}}+m_{j}\frac{\partial\phi_{j-1}}{\partial
m_{j}}\mathrm{e}^{m_{j}\phi_{j}}=\int\phi_{j}\mathrm{e}^{m_{j}\phi_{j}}%
d\mu_{j},
\]
i.e.%
\[
\frac{\partial\phi_{j-1}}{\partial m_{j}}\left(  \mathbf{x}^{\left(  j\right)
}\right)  =\frac{1}{m_{j}}\left[  \int\phi_{j}\left(  \mathbf{x}^{\left(
j-1\right)  },x_{j}\right)  K_{j}\left(  \mathbf{x}^{\left(  j-1\right)
},dx_{j}\right)  -\phi_{j-1}\left(  \mathbf{x}^{\left(  j-1\right)  }\right)
\right]  .
\]
Combining that with (\ref{PAR1}), (\ref{PAR2}) we get%
\begin{align}
\frac{\partial\phi_{0,\mathbf{m}}}{\partial m_{j}}  & =\frac{1}{m_{j}}\left[
\int\phi_{j}dG^{\left(  j\right)  }-\int\phi_{j-1}dG^{\left(  j-1\right)
}\right] \label{PAR3}\\
& =\frac{1}{m_{j}^{2}}\int H\left(  K_{j}\left(  \mathbf{x}^{\left(
j-1\right)  },\cdot\right)  \mid\mu_{j}\right)  G^{\left(  j-1\right)
}\left(  d\mathbf{x}^{\left(  j-1\right)  }\right)  .\nonumber
\end{align}

Theorem \ref{Th_Parisi_formula} is immediate from the following result:

\begin{proposition}
\label{Th_Gibbs_maximizer}Assume that $\phi:S^{n}\rightarrow{\mathbb{R}}$ is
bounded and continuous. Then there is a unique measure $\nu$ maximizing
$\operatorname{Gibbs}\left(  \nu,\phi\right)  $ under the constraint $\nu
\in\bigcap\nolimits_{j=1}^{n}\mathcal{R}_{j}.$ This measure is of the form
$\nu=G_{\phi,\mathbf{m}}$ where $\mathbf{m}$ is the unique element in $\Delta$
minimizing (\ref{Variationformula3}). For this $\mathbf{m},$ we have%
\begin{equation}
\operatorname{Gibbs}\left(  G,\phi\right)  =\operatorname{Parisi}\left(
\phi,\mathbf{m}\right)  .\label{Gibbs=Parisi}%
\end{equation}

\end{proposition}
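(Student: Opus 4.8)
The plan is to prove Proposition~\ref{Th_Gibbs_maximizer} by a Lagrangian/variational argument, treating the constraints $\nu\in\mathcal{R}_j$ one at a time. First I would observe that $\operatorname{Gibbs}(\cdot,\phi)$ is strictly concave (since $\nu\mapsto H(\nu\mid\mu)$ is strictly convex on $\{H(\nu\mid\mu)<\infty\}$) and upper semicontinuous, while $\bigcap_{j=1}^n\mathcal{R}_j$ is convex and compact; hence a maximizer $\nu^\ast$ exists and is unique. The real content is to identify it. I would introduce Lagrange multipliers $\lambda_1,\dots,\lambda_n\ge 0$ for the constraints $H(\nu^{(j)}\mid\mu^{(j)})\le\Gamma_j\log 2$ and consider, for fixed multipliers, the unconstrained problem of maximizing
\[
\int\phi\,d\nu-H(\nu\mid\mu)-\sum_{j=1}^n\lambda_j\Big(H(\nu^{(j)}\mid\mu^{(j)})-\Gamma_j\log 2\Big).
\]
Using the additivity of entropy \eqref{Entropy_Add} iterated over the $n$ coordinates, $H(\nu\mid\mu)=\sum_{j=1}^n H(\nu\mid\nu^{(j-1)}\otimes\mu_j\otimes\cdots\otimes\mu_n)$-type decompositions let one rewrite the objective coordinate-by-coordinate, and the maximizer is obtained by the standard tilting computation: conditionally on $\mathbf{x}^{(j-1)}$, the optimal kernel in the $j$-th slot is proportional to $\exp[m_j\phi_j]\,\mu_j$, where $m_j=(1+\lambda_j+\lambda_{j+1}+\cdots+\lambda_n)^{-1}$. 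Carrying this out recursively from $j=n$ down to $j=1$ reproduces exactly the functions $\phi_j$ of \eqref{Def_phin}--\eqref{Def_phij} and the measure $G_{\phi,\mathbf{m}}$; the condition $\lambda_j\ge 0$ translates into $m_1\le m_2\le\cdots\le m_n\le 1$, i.e. $\mathbf{m}\in\Delta$. This is the step I expect to be the main obstacle: one must check carefully that the iterated tilting really solves the \emph{joint} variational problem (not just the marginal ones), which is where the nested structure of the $\mathcal R_j$ and the ordering of the $m_j$ enter, and one must handle the boundary cases where some $\lambda_j=0$ (so $m_j=m_{j+1}$, with the merged-kernel formula from the Remark) or where a constraint is active versus slack.

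Next I would record the value of the objective at $G=G_{\phi,\mathbf{m}}$. From the definition of $\gamma$ and the $K_j$ one gets $H(G^{(j)}\mid\mu^{(j)})-H(G^{(j-1)}\mid\mu^{(j-1)})=\int H(K_j(\mathbf{x}^{(j-1)},\cdot)\mid\mu_j)\,G^{(j-1)}(d\mathbf{x}^{(j-1)})$, which by \eqref{PAR1} equals $m_j[\int\phi_j\,dG^{(j)}-\int\phi_{j-1}\,dG^{(j-1)}]$; summing telescopically and using $\phi_n=\phi$, $\int\phi_0\,dG^{(0)}=\phi_0(\mathbf{m})$ gives both $\int\phi\,dG$ and $H(G\mid\mu)$ as explicit sums, and hence
\[
\operatorname{Gibbs}(G,\phi)=\int\phi\,dG-H(G\mid\mu)=\phi_0(\mathbf{m})+\sum_{j=1}^n\Big(\tfrac1{m_j}-1\Big)H(G^{(j)}\mid\mu^{(j)})-\cdots,
\]
which after using the active-constraint relations $H(G^{(j)}\mid\mu^{(j)})=\Gamma_j\log 2$ (valid exactly on the coordinates where $\lambda_j>0$, i.e.\ $m_j<m_{j+1}$) collapses to $\sum_j \gamma_j m_j^{-1}\log 2+\phi_0(\mathbf{m})-\log 2=\operatorname{Parisi}(\mathbf{m},\phi)$. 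Some care is needed for the coordinates with slack constraints, but there $\tfrac1{m_j}-1$ has the "wrong" sign contribution absorbed because consecutive equal $m_j$'s get grouped and the merged block behaves as a single constraint at level $\Gamma_{l}\log 2$; this bookkeeping establishes \eqref{Gibbs=Parisi}.

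Finally, to pin down which $\mathbf{m}\in\Delta$ is the right one and to get Theorem~\ref{Th_Parisi_formula}, I would argue that since $J^\ast(\phi)=\sup\{\operatorname{Gibbs}(\nu,\phi):\nu\in\bigcap_j\mathcal R_j\}$ is attained at some $G_{\phi,\mathbf{m}}$, we have $J^\ast(\phi)=\operatorname{Parisi}(\mathbf{m},\phi)$ for that $\mathbf{m}$, so certainly $J^\ast(\phi)\ge\inf_{\mathbf{m}\in\Delta}\operatorname{Parisi}(\mathbf{m},\phi)$; for the reverse inequality I would show directly that for \emph{every} $\mathbf{m}\in\Delta$, $G_{\phi,\mathbf{m}}\in\bigcap_j\mathcal R_j$ and $\operatorname{Gibbs}(G_{\phi,\mathbf m},\phi)\le\operatorname{Parisi}(\mathbf m,\phi)$ — indeed one computes $\operatorname{Parisi}(\mathbf m,\phi)-\operatorname{Gibbs}(G_{\phi,\mathbf m},\phi)=\sum_j(\gamma_j\log 2-H(G^{(j)}\mid\mu^{(j)}))/m_j\ge 0$ since $H(G^{(j)}\mid\mu^{(j)})\le\Gamma_j\log 2$ forces, after differencing, $H(G^{(j)}\mid\mu^{(j)})-H(G^{(j-1)}\mid\mu^{(j-1)})\le\gamma_j\log 2$ along the relevant blocks — giving $J^\ast(\phi)\le\operatorname{Parisi}(\mathbf m,\phi)$ for all $\mathbf m$, hence $\le\inf$. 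Uniqueness of the minimizing $\mathbf m$ follows from strict concavity of $\operatorname{Gibbs}$ (the maximizer $\nu$ is unique, and the map $\mathbf{m}\mapsto G_{\phi,\mathbf m}$ is injective on the set of $\mathbf m$ that can be optimal), or alternatively from strict convexity of $\mathbf m\mapsto\operatorname{Parisi}(\mathbf m,\phi)$ on $\Delta$, which I would verify using \eqref{PAR3}: $\partial\phi_0/\partial m_j=m_j^{-2}\int H(K_j\mid\mu_j)\,dG^{(j-1)}\ge 0$ together with a monotonicity analysis of the stationarity equations $\gamma_j\log 2=\int H(K_{j,\mathbf m}\mid\mu_j)\,dG^{(j-1)}_{\mathbf m}$.
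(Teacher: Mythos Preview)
Your Lagrangian strategy is a natural alternative to the paper's direct approach, and the first two paragraphs are essentially correct: the unconstrained maximizer of
\[
\int\phi\,d\nu-H(\nu\mid\mu)-\sum_{j}\lambda_j\bigl(H(\nu^{(j)}\mid\mu^{(j)})-\Gamma_j\log 2\bigr)
\]
is indeed $G_{\phi,\mathbf m}$ with $m_j=(1+\sum_{k\ge j}\lambda_k)^{-1}$, and the dual value at $\mathbf m$ is exactly $\operatorname{Parisi}(\mathbf m,\phi)$. Where the paper instead fixes the Parisi minimizer $\mathbf m$ first, reads off from the first-order conditions \eqref{PAR_Derivative} that $G_{\phi,\mathbf m}\in\bigcap_j\mathcal R_j$ with equality at block boundaries, and then proves directly via the telescope $D_k$ that $\operatorname{Gibbs}(\nu,\phi)\le\operatorname{Gibbs}(G,\phi)$ for every admissible $\nu$, your route would go through Lagrangian duality. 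Both are valid.

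However, your third paragraph has a genuine gap. The inequality you need is $J^\ast(\phi)\le\operatorname{Parisi}(\mathbf m,\phi)$ for every $\mathbf m\in\Delta$, and the argument you give for it is wrong on several counts. First, the claim that $G_{\phi,\mathbf m}\in\bigcap_j\mathcal R_j$ for \emph{every} $\mathbf m\in\Delta$ is false: already for $n=1$, if $\phi$ is chosen so that $H(G_{\phi,1}\mid\mu)>\log 2$ then $G_{\phi,1}\notin\mathcal R_1$. The paper only obtains $G_{\phi,\mathbf m}\in\bigcap_j\mathcal R_j$ at the \emph{minimizing} $\mathbf m$, via the KKT-type perturbation argument. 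Second, even granting your two premises, they do not yield $J^\ast(\phi)\le\operatorname{Parisi}(\mathbf m,\phi)$: from $G_{\phi,\mathbf m}\in\bigcap_j\mathcal R_j$ you get $\operatorname{Gibbs}(G_{\phi,\mathbf m},\phi)\le J^\ast(\phi)$, and combining this with $\operatorname{Gibbs}(G_{\phi,\mathbf m},\phi)\le\operatorname{Parisi}(\mathbf m,\phi)$ gives nothing. Third, your displayed identity for $\operatorname{Parisi}-\operatorname{Gibbs}(G)$ is incorrect; the correct telescoped form is $\sum_j\bigl(\tfrac{1}{m_j}-\tfrac{1}{m_{j+1}}\bigr)\bigl(\Gamma_j\log 2-H(G^{(j)}\mid\mu^{(j)})\bigr)$ (with $m_{n+1}=1$), and the implication ``$H(G^{(j)}\mid\mu^{(j)})\le\Gamma_j\log 2$ for all $j$ forces the increments to satisfy $\le\gamma_j\log 2$'' is false.

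The fix is already implicit in your own setup: weak Lagrangian duality. For any $\nu\in\bigcap_j\mathcal R_j$ and any $\lambda_j\ge 0$,
\[
\operatorname{Gibbs}(\nu,\phi)\le\operatorname{Gibbs}(\nu,\phi)+\sum_j\lambda_j\bigl(\Gamma_j\log 2-H(\nu^{(j)}\mid\mu^{(j)})\bigr)\le\sup_{\nu'}\bigl[\cdots\bigr]=\operatorname{Parisi}(\mathbf m,\phi),
\]
the last equality being exactly your computation of the unconstrained Lagrangian maximum. This is, in disguise, the paper's $D_k$ telescope (run for an arbitrary admissible $\nu$, not for $G$). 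For the matching lower bound you then need either strong duality (Slater's condition holds since $\mu$ lies in the interior of every $\mathcal R_j$) or, equivalently, the paper's argument that at the minimizing $\mathbf m$ the measure $G_{\phi,\mathbf m}$ is feasible with the right constraints active, so that $\operatorname{Gibbs}(G_{\phi,\mathbf m},\phi)=\operatorname{Parisi}(\mathbf m,\phi)$.
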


\begin{proof}
From strict convexity of the relative entropy, and the fact that
$\bigcap\nolimits_{j=1}^{n}\mathcal{R}_{j}$ is compact and convex, it follows
that there is a unique maximizer $\nu$ of $\operatorname{Gibbs}\left(
\nu,\phi\right)  $ under this constraint.

Also, a straightforward application of H\"{o}lder's inequality shows that
$\operatorname{Parisi}\left(  \phi,\mathbf{m}\right)  $ is a strictly convex
function in the variables $1/m_{j}.$ Therefore, it follows that there is a
uniquely attained minimum of $\operatorname{Parisi}\left(  \phi,\mathbf{m}%
\right)  $ as a function of $\mathbf{m}\in\Delta.$ This minimizing
$\mathbf{m}=\left(  m_{1},\ldots,m_{n}\right)  $, we can be split into
subblocks of equal values: There is a number $K,\ 0\leq K\leq n,$ and indices
$0<j_{1}<j_{2}<\cdots<j_{K}\leq n$ such that%
\begin{align*}
0  & <m_{1}=\cdots=m_{j_{1}}<m_{j_{1}+1}=\cdots=m_{j_{2}}\\
& <m_{j_{2}+1}\cdots<m_{j_{K-1}+1}=\cdots=m_{j_{K}}\\
& <m_{j_{K}+1}=\cdots m_{n}=1.
\end{align*}
$K=0$ just means that all $m_{i}=1.$ If $j_{K}=n,$ then all $m_{i}$ are $<1.$
We write $G=G_{\phi,\mathbf{m}}.$

>From (\ref{PAR3}), we immediately have%
\begin{equation}
\frac{\partial\operatorname{Parisi}\left(  \phi,\mathbf{m}\right)  }{\partial
m_{j}}=\frac{1}{m_{j}^{2}}\left[  \int H\left(  K_{j}\left(  \mathbf{x}%
^{\left(  j-1\right)  },\cdot\right)  \mid\mu_{j}\right)  G^{\left(
j-1\right)  }\left(  d\mathbf{x}^{\left(  j-1\right)  }\right)  -\gamma
_{j}\log2\right]  .\label{PAR_Derivative}%
\end{equation}

Set $d_{j}\overset{\mathrm{def}}{=}\int H\left(  K_{j}\left(  \mathbf{x}%
^{\left(  j-1\right)  },\cdot\right)  \mid\mu_{j}\right)  G_{\mathbf{m}%
}^{\left(  j-1\right)  }\left(  d\mathbf{x}^{\left(  j-1\right)  }\right)
.$We use (\ref{PAR_Derivative}) and the minimality of $\operatorname{Parisi}%
\left(  \phi,\mathbf{\cdot}\right)  $ at $\mathbf{m.}$ We can perturb
$\mathbf{m}$ by moving a whole block $m_{j_{r}+1}=\cdots=m_{j_{r+1}}$ up and
down locally, without leaving $\Delta,$ provided it is not the possibly
present block of values $1.$ This leads to%
\[
\sum_{i=j_{r}+1}^{j_{r+1}}d_{i}=\log2\sum_{i=j_{r}+1}^{j_{r+1}}\gamma_{i}.
\]
Furthermore, we can always move first parts of blocks, say $m_{j_{r}+1}%
=\cdots=m_{k},\ k\leq j_{r+1}$ locally down, without leaving $\Delta,$ so that
we get%
\[
\sum_{i=j_{r}+1}^{j_{k}}d_{i}\leq\log2\sum_{i=j_{r}+1}^{j_{k}}\gamma_{i}.
\]
These two observations imply%
\begin{equation}
G\in\bigcap_{j=1}^{n}\mathcal{R}_{j}\cap\bigcap_{r=1}^{K}\mathcal{R}_{j_{r}%
}^{=}.\label{PAR4}%
\end{equation}

We next prove%
\begin{equation}
\operatorname{Gibbs}\left(  \nu,\phi\right)  \leq\operatorname{Gibbs}\left(
G,\phi\right) \label{PAR5}%
\end{equation}
for any $\nu\in\bigcap_{j=1}^{n}\mathcal{R}_{j}.$

We first prove the case $n=1.$ If $m<1,$ then%
\[
H\left(  G\mid\mu\right)  =\log2\geq H\left(  \nu\mid\mu\right)
\]
by (\ref{PAR4}) and the assumption $\nu\in\mathcal{R}_{1}.$ Therefore, in any
case%
\begin{align*}
\operatorname{Gibbs}\left(  G,\phi\right)  -\operatorname{Gibbs}\left(
\nu,\phi\right)   & \geq\int\phi dG-\frac{1}{m}H\left(  G\mid\mu\right) \\
& -\left[  \int\phi d\nu-\frac{1}{m}H\left(  \nu\mid\mu\right)  \right] \\
& =\frac{1}{m}H\left(  \nu\mid G\right)  \geq0
\end{align*}

The general case follows by a slight extension of the above argument. Put%
\[
D_{k}\overset{\mathrm{def}}{=}\int\phi_{k}dG^{\left(  k\right)  }-\frac
{1}{m_{k+1}}H\left(  G^{\left(  k\right)  }\mid\mu^{\left(  k\right)
}\right)  -\int\phi_{k}d\nu^{\left(  k\right)  }+\frac{1}{m_{k+1}}H\left(
\nu^{\left(  k\right)  }\mid\mu^{\left(  k\right)  }\right)  ,
\]
$D_{0}\overset{\mathrm{def}}{=}0,D_{n}=\operatorname{Gibbs}\left(
G,\phi\right)  -\operatorname{Gibbs}\left(  \nu,\phi\right)  .$ We prove
$D_{k-1}\leq D_{k}$ for all $k,$ so that the claim follows. Remark that as
above in the $n=1$ case, if $m_{k}<m_{k+1},$ then $H\left(  G^{\left(
k+1\right)  }\mid\mu^{\left(  k+1\right)  }\right)  =\Gamma_{k}\log2,$ and
therefore, in any case%
\begin{align*}
D_{k}  & \geq\int\phi_{k}dG^{\left(  k\right)  }-\frac{1}{m_{k}}H\left(
G^{\left(  k\right)  }\mid\mu^{\left(  k\right)  }\right)  -\int\phi_{k}%
d\nu^{\left(  k\right)  }+\frac{1}{m_{k}}H\left(  \nu^{\left(  k\right)  }%
\mid\mu^{\left(  k\right)  }\right) \\
& =\int\phi_{k-1}dG^{\left(  k-1\right)  }-\frac{1}{m_{k}}H\left(  G^{\left(
k-1\right)  }\mid\mu^{\left(  k-1\right)  }\right)  -\int\phi_{k}d\nu^{\left(
k\right)  }+\frac{1}{m_{k}}H\left(  \nu^{\left(  k\right)  }\mid\mu^{\left(
k\right)  }\right)  .
\end{align*}
As%
\begin{align*}
& H\left(  \nu^{\left(  k\right)  }\mid\mu^{\left(  k\right)  }\right)
-m_{k}\int\phi_{k}d\nu^{\left(  k\right)  }+m_{k}\int\phi_{k-1}d\nu^{\left(
k-1\right)  }\\
& =H\left(  \nu^{\left(  k-1\right)  }\mid\mu^{\left(  k-1\right)  }\right)
+\int\log\frac{\nu^{\left(  k\right)  }\left(  dx_{k}\mid\mathbf{x}^{\left(
k-1\right)  }\right)  \mathrm{e}^{m_{k}\phi_{k-1}\left(  \mathbf{x}^{\left(
k-1\right)  }\right)  }}{\mu_{k}\left(  dx_{k}\right)  \mathrm{e}^{m_{k}%
\phi_{k}\left(  \mathbf{x}^{\left(  k\right)  }\right)  }}\nu^{\left(
k\right)  }\left(  d\mathbf{x}^{\left(  k\right)  }\right) \\
& \geq H\left(  \nu^{\left(  k-1\right)  }\mid\mu^{\left(  k-1\right)
}\right)  ,
\end{align*}
(\ref{PAR5}) is proved.

(\ref{PAR4}) and (\ref{PAR5}) identify $G=G_{\phi,\mathbf{m}}$ as the unique
maximizer of $G\left(  \cdot,\phi\right)  $ under the constraint
$\bigcap\nolimits_{j=1}^{n}\mathcal{R}_{j}.$

The identification (\ref{Gibbs=Parisi}) comes by a straightforward computation.
\end{proof}

\end{document}